\newtheorem{thm}{Theorem}[section]
 \newtheorem{cor}[thm]{Corollary}
 \newtheorem{lem}[thm]{Lemma}
   \newtheorem{examp}[thm]{Example}
  \newtheorem{defin}[thm]{Definition}
  \newtheorem{prop}[thm]{Proposition}
  \newtheorem{rem}[thm]{Remark}
\author{Mahboubeh Sanaei $^{*}$, 
Shervin Sahebi $^{**}$ and Hamid H. S. Javadi $^{***}$}
\address{$^{*,**}$ Department of Mathematics, Islamic Azad University,
Central Tehran Branch, 13185/768, Iran. 
$^{***}$ Department of Mathematics and Computer Science, Shahed University, Tehran, Iran.
}
\email{sahebi@iauctb.ac.ir; mah.sanaei.sci@iauctb.ac.ir; h.s.javadi@shahed.ac.ir.
}
\keywords{$ \alpha $-skew Armendariz Rings; Central Armendariz Rings; Central $ \alpha $-skew Armendariz Rings}
\subjclass{Primary 16U20,  Secondary 16S36}
\begin{document}

\title[On Central Skew Armendariz Rings]{On Central Skew Armendariz Rings}

\begin{abstract}
For a ring endomirphism $ \alpha $, we introduce the central $ \alpha $-skew Armendariz rings, which are a generalization of $\alpha$-skew Armendariz rings and central Armendariz rings, and investigate their properties. For a ring $R$, we show that if $ \alpha(e)=e $ for each idempotent $ e\in R $, then $R$ is a central $ \alpha $-skew Armendariz ring if and only if $R$ is abelian; and $eR$ and $(1-e)R$ are central $\alpha$-skew Armendariz for some $e^{2}=e\in R$. We prove that if $\alpha^{t}=I_{R}$ for some positive integer $t$, $ R $ is central $ \alpha $-skew Armendariz if and only if the polynomial ring $ R[x] $ is central $ \alpha $-skew Armendariz if and only if the Laurent polynomial ring $R[x,x^{-1}]$ is central $\alpha$-skew Armendariz. Moreover, it is proven that if $\alpha(e)=e$ for any $e^{2}=e \in R$, and $R$ is a central $\alpha$-skew Armendariz ring, then $R$ is right p.p-ring if and only if $R[x;\alpha]$ is right p.p-ring. 

\end{abstract}
\maketitle

\section{Introduction} \label{sect1}
Throughout this article, $ R $ denotes an associative ring with identity. The center of a ring $R$ and the set of all the units in $ R $ are denoted by $C(R)$ and $ U(R) $ repectively. In 1997, Rege and Chhawchharia introduced the notion of an Armendariz ring. They called a ring $ R $ an \textit{Armendariz ring} if whenever polynomials $ f(x)=a_{0}+a_{1}x+\cdots +a_{n}x^{n}$ and $ g(x)=b_{0}+b_{1}x+\cdots +b_{m}x^{m} \in R[x]$ satisfy $ f(x)g(x)=0 $ then $ a_{i}b_{j} =0$ for all $ i$ and $j $. The name "Armendariz ring" is chosen because Armendariz \cite[Lemma 1]{EP} has been shown that reduced ring (that is a ring without nonzero nilpotent) saisfies this condition. A number of properties of the Armendariz rings have been studied in 
\cite{a02, EP, a03, a04, Reg} 
So far Armendariz rings are generalized in several forms.
Let $ \alpha $ be an endomorphism of a ring $ R $. Hong et al., (2003) \cite{Rizvi} give a possible generalization of the Armendariz rings. A ring $ R $ is called $ \alpha $-\textit{Armendariz} if for any $ f(x)= \sum_{i=0}^{n}a_{i}x^{i},g(x)= \sum_{j=0}^{m}b_{j}x^{j} \in R[x;\alpha];~ f(x)g(x)=0 $ implies that $ a_{i}b_{j}=0 $ for all $ i,j $. According to \cite{alpha}, a ring $ R $ is called $ \alpha $-\textit{skew Armendariz} if $ f(x)g(x)=0 $, such that $ f(x)= \sum_{i=0}^{n}a_{i}x^{i},g(x)= \sum_{j=0}^{m}b_{j}x^{j} \in R[x;\alpha] $, implies that $ a_{i}\alpha^{i}(b_{j})=0 $ for all $ i,j $.
They showed that if a ring $R$
is 
$\alpha$-rigid (that is, if $a\alpha(a)=0$ then $a=0$ for $a\in R$), then $R[x]/\langle x^{2}\rangle$ is $\bar{\alpha}$-skew Armendariz. They also showed that if $\alpha^{t}=I_{R}$ for some positive integer $t$, then $R$ is $\alpha$-skew Armendariz if and only if $R[x] $ is $\alpha$-skew Armendariz. Agayev et al., \cite{a01} called a ring $R$ central Armendariz if whenever polynomials 
$f(x)=a_{0}+a_{1}x+\cdots + a_{n}x^{n}$,
$g(x)=b_{0}+b_{1}x+\cdots + b_{m}x^{m} \in R[x]$ satisfy $f(x)g(x)=0$, then $a_{i}b_{j}\in C(R)$ for all $i$ and $j$. They showed that the class of central Armendariz rings lies precisely between classes of Armendariz rings and abelian rings (that is, its idempotents belong to $C(R)$.) For a ring $R$, they proved that $R$ is central Armendariz if and only if $R[x]$ is central Armendariz if and only if
$R[x,x^{-1}]$ is central Armendariz where $ R[x] $ is the polynomial ring and $ R[x,x^{-1}] $ is the Laurent polynomial ring over a ring $ R $. Furthermore, they showed that if $R$ is reduced, consequently $R[x]/\langle x^{n}\rangle$ is central Armendariz, and the converse holds if $R$ is semiprime, where $\langle x^{n}\rangle$ is the ideal generated by $x^{n}$ and $n\geqslant 2$.
Motivated by the above results, for an endomorphism $\alpha$ of a ring $R$, we investigate a generalization of the $\alpha$-skew Armendariz rings and the central Armendariz rings.

\section {Central $ \alpha $-skew Armendariz rings}

 \noindent
In this section, the central $ \alpha $-skew Armendariz rings are introduced 
as a generalization of the $ \alpha $-skew Armendariz ring. 
\begin{defin}\label{def 1.1}  
Let $ \alpha $ be an endomorphism of a ring $ R $. The ring $ R $ is called a central $ \alpha $-skew Armendariz ring if for any nonzero polynomial  $ f(x)=\sum_{i=0}^{n}a_{i}x^{i}$ and $g(x)= \sum_{j=0}^{m}b_{j}x^{j} \in R[x;\alpha] ,  $ $ f(x)g(x)=0 $, implies that
 $a_{i}\alpha^{i}(b_{j}) \in C(R)$ for each $i,~j $.
\end{defin}
Note that all commutative rings, $ \alpha $-skew Armendariz rings and the subrings of central $ \alpha $-skew Armendariz rings are central $ \alpha $-skew Armendariz. Also, since each reduced ring $ R $ is $ I_{R} $-skew Armendariz where $ I_{R} $ is an identity map; each reduced ring is central $ I_{R} $-skew Armendariz ring. \\ 
The following examples show that the central $\alpha$-skew Armendariz rings are not nescessary $\alpha$-skew Armendariz.
 \begin{examp}\label{examp1}(1)
Let $ R=R_{1}\oplus R_{2} $, where $ R_{i} $ is a commutative ring for $ i=1,2 $. Let $ \alpha :R\longrightarrow R $ be an automorphism defined by $ \alpha ((a,b))= (b,a) $, then for $ f(x)=(1,0)-(1,0)x $ and $ g(x)= (0,1)+(1,0)x $ in $ R[x;\alpha] $, $ f(x)g(x)=0 $, but $ (1,0)\alpha((0,1))=(1,0)^{2}\neq 0 $. Therefore, $ R $ is not $ \alpha  $-skew Armendariz. But  $ R $ is central $ \alpha $-skew Armendariz, since $ R $ is commutative.\\
(2)
Let $ \mathbb{Z}_{4} $ be the ring of integers modulo $ 4 $. Consider a ring
$
R=\Big\{ 
\bigl( \begin{smallmatrix}
  \bar{a}&\bar{b}\\ 0&\bar{a}
\end{smallmatrix} \bigr) \Big| \bar{a},\bar{b} \in \mathbb{Z}_{4} \Big\}
$
and $ \alpha : R\longrightarrow R $ be an endomorphism defined by 
$ \alpha \Big( \bigl( \begin{smallmatrix}
  \bar{a}&\bar{b}\\ 0&\bar{a}
\end{smallmatrix} \bigr) \Big) = \bigl( \begin{smallmatrix}
  \bar{a}&- \bar{b}\\ 0&\bar{a}
\end{smallmatrix} \bigr) $.
The ring $ R $ is not $ \alpha $-skew Armendariz. In fact 
$ \Big( \bigl( \begin{smallmatrix}
  \bar{2}&\bar{0}\\ 0&\bar{2}
\end{smallmatrix} \bigr) + \bigl( \begin{smallmatrix}
  \bar{2}&\bar{1}\\ 0&\bar{2}
\end{smallmatrix} \bigr) x \Big) ^{2}=0 \in R[x;\alpha]$
but
$ \bigl( \begin{smallmatrix}
  \bar{2}&\bar{1}\\ 0&\bar{2}
\end{smallmatrix} \bigr)  \alpha \Big(\bigl( \begin{smallmatrix}
  \bar{2}&\bar{0}\\ 0&\bar{2}
\end{smallmatrix} \bigr) \Big)= \bigl( \begin{smallmatrix}
  \bar{0}&\bar{2}\\ 0&\bar{0}
\end{smallmatrix} \bigr) \neq 0$. But it can be easily checked that $ R $ is commutative and so, it is central $ \alpha $-skew Armendariz ring.
  
\end{examp}
Let 
$R_{i} $
be a ring and 
$\alpha_{i} $
an endomorphism of $R_{i}$, for each
$i\in I$. 
Then for  
$\prod_{i\in I}R_{i}$
and the endomorphism 
$\bar{\alpha}:\prod_{i\in I}R_{i}\longrightarrow \prod_{i\in I}R_{i}$
defined by
$\bar{\alpha}(a_{i})=(\alpha(a_{i}))$, 
$\prod_{i\in I}R_{i} $
is central 
$\alpha$-skew Armendariz if and only if each $R_{i}$
is central 
$\alpha$-skew Armendariz.

  \begin{prop}
Let $\alpha$ be an endomorphism of a ring $R$. Let $S$ be a ring and $\varphi : R\longrightarrow S$ an isomorphism. Then R is central  $\alpha$-skew Armendariz if and only if $S$ is central $\varphi\alpha \varphi^{-1}$ -skew Armendariz.
\end{prop}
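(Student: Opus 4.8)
The plan is to transport the entire skew-polynomial structure along $\varphi$. Write $\beta=\varphi\alpha\varphi^{-1}$, which is an endomorphism of $S$, and define $\bar\varphi : R[x;\alpha]\longrightarrow S[x;\beta]$ on coefficients by $\bar\varphi\big(\sum_{i} a_{i} x^{i}\big)=\sum_{i} \varphi(a_{i})x^{i}$. First I would verify that $\bar\varphi$ is a ring isomorphism. Additivity and bijectivity are immediate from those of $\varphi$; the only point requiring care is compatibility with the twisted multiplication. In $R[x;\alpha]$ one has $xa=\alpha(a)x$, and I would check that $\bar\varphi(xa)=\varphi(\alpha(a))x$ agrees with $\bar\varphi(x)\bar\varphi(a)=x\varphi(a)=\beta(\varphi(a))x=\varphi\alpha\varphi^{-1}\varphi(a)\,x=\varphi(\alpha(a))x$ computed in $S[x;\beta]$. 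Since the defining relation of the skew polynomial ring is respected, $\bar\varphi$ extends to a ring isomorphism.

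Next I would record the two algebraic identities that make the central condition transfer. Because $\beta=\varphi\alpha\varphi^{-1}$, iterating gives $\beta^{i}=\varphi\alpha^{i}\varphi^{-1}$, so for any $b\in R$ we get $\beta^{i}(\varphi(b))=\varphi(\alpha^{i}(b))$. Consequently, for $f=\sum_{i} a_{i} x^{i}$ and $g=\sum_{j} b_{j} x^{j}$ in $R[x;\alpha]$, the products appearing in the definition satisfy $\varphi(a_{i})\,\beta^{i}(\varphi(b_{j}))=\varphi(a_{i})\varphi(\alpha^{i}(b_{j}))=\varphi\big(a_{i}\alpha^{i}(b_{j})\big)$. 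The second identity is that an isomorphism preserves the center, that is $\varphi(C(R))=C(S)$; hence $a_{i}\alpha^{i}(b_{j})\in C(R)$ if and only if $\varphi(a_{i})\beta^{i}(\varphi(b_{j}))\in C(S)$.

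With these in hand the equivalence is routine. Suppose $R$ is central $\alpha$-skew Armendariz and let $F,G\in S[x;\beta]$ with $FG=0$. Since $\bar\varphi$ is onto, write $F=\bar\varphi(f)$ and $G=\bar\varphi(g)$ with $f=\sum_{i} a_{i} x^{i}$ and $g=\sum_{j} b_{j} x^{j}$; as $\bar\varphi$ is an isomorphism, $FG=\bar\varphi(fg)=0$ forces $fg=0$ in $R[x;\alpha]$. By hypothesis $a_{i}\alpha^{i}(b_{j})\in C(R)$, and by the identities above $\varphi(a_{i})\beta^{i}(\varphi(b_{j}))=\varphi\big(a_{i}\alpha^{i}(b_{j})\big)\in C(S)$, which is exactly the central $\beta$-skew Armendariz condition for $F$ and $G$. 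The converse is the same argument run through $\varphi^{-1}$, which is an isomorphism $S\longrightarrow R$ satisfying $\varphi^{-1}\beta\varphi=\alpha$, so the two implications are symmetric.

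I expect the only genuine subtlety to be the verification that $\bar\varphi$ respects the skew multiplication $xa=\alpha(a)x$; this is precisely the step that forces the choice $\beta=\varphi\alpha\varphi^{-1}$. Everything else, namely surjectivity of $\bar\varphi$, preservation of the center under an isomorphism, and the index bookkeeping for $\beta^{i}$, is formal.
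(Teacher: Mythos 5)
Your proof is correct and follows essentially the same route as the paper: transport everything along $\varphi$, observe that $f(x)g(x)=0$ in $R[x;\alpha]$ corresponds to the vanishing of the image product in $S[x;\varphi\alpha\varphi^{-1}]$, and use $\varphi(a_i)\beta^{i}(\varphi(b_j))=\varphi\bigl(a_i\alpha^{i}(b_j)\bigr)$ together with $\varphi(C(R))=C(S)$. Your write-up is in fact a bit more careful than the paper's, since you explicitly check that the coefficientwise map respects the skew relation $xa=\alpha(a)x$, a point the paper asserts without verification.
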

\begin{proof}
Let $\alpha^{\prime}= \varphi \alpha \varphi^{-1}$. Clearly, $\alpha^{\prime}$ is an endomorphism of $S$. Suppose that $a^{\prime} = \varphi(a)$, for $a \in R$. Note that $\varphi (a\alpha^{i}(b))= a^{\prime}\varphi (\alpha^{i}(b))$ for all $a,b\in R$. Also $f(x)=\sum_{i=0}^{n}a_{i}x^{i}$ and $g(x)=\sum_{j=0}^{m}b_{j}x^{j}$ are nonzero in $R[x;\alpha]$ if and only if 
 $f^{\prime}(x)=\sum_{i=0}^{n}a^{\prime}_{i}x^{i}$ and $g^{\prime}(x)=\sum_{j=0}^{m}b^{\prime}_{j}x^{j}$ are nonzero in $S[x;\alpha^{\prime}]$. On the other hand 
$f(x)g(x)=0$
in
$R[x;\alpha]$
if and only if
$f^{\prime}(x)g^{\prime}(x)=0$
in
$S[x;\alpha^{\prime}]$. Also since $\varphi$ is an isomorphism,
$a^{\prime}_{i}(\varphi \alpha\varphi)^{i}b^{\prime}_{j}=a^{\prime}_{i}\varphi\alpha^{i}\phi^{-1}(b^{\prime}_{j})=\varphi(a_{i})\varphi\alpha^{i}(b_{j})=\varphi(a_{i}\alpha^{i}(b_{j})) \in C(S)$ if and only if 
$a_{i}\alpha^{i}(b_{j})\in C(R)$.
 Thus $R$ is central $\alpha$-skew Armendariz if and only if $S$ is central $\varphi\alpha\varphi^{-1}$-skew Armendariz.
\end{proof}

The following example shows that there exists a central $\alpha$-skew Armendariz ring such that 
$\alpha(e)\neq e$
for some $e^{2}=e \in R$.
\begin{examp}
Let
$R=\Big\{ 
\bigl( \begin{smallmatrix}
  a & 0\\ 0& b
\end{smallmatrix} \bigr) \Big| a,b \in \mathbb{Z}_{2}\oplus \mathbb{Z}_{2} \Big\}$, where $\mathbb{Z}_{2}$ is the ring of integers modulo $2$.
Let 
$\alpha : R\longrightarrow R$ be defined by 
$\alpha \Big( \bigl( \begin{smallmatrix}
  a & 0\\ 0& b
\end{smallmatrix} \bigr) \Big) =\bigl( \begin{smallmatrix}
  b & 0\\ 0& a
\end{smallmatrix} \bigr) $. Then $R$ is a commutative ring, so it is central $\alpha$-skew Armendariz. But  
$\alpha (e) \neq e$ for $e=\bigl( \begin{smallmatrix}
  (1,0) & (0,0)\\ (0,0)& (0,1)
\end{smallmatrix} \bigr)$.
\end{examp}
Recall that every ring $ R $ is said to be \textit{abelian} if every idempotent of $ R $ is central.
\begin{prop}\label{prop1}
Let $R$ be a ring and $ \alpha $ be an endomorphism with $ \alpha(e)=e $ for any $ e^{2}=e\in R $. Then $R$ is central $ \alpha $-skew Armendariz ring if and only if $R$ is abelian and $eR$ and $(1-e)R$ are central $\alpha$-skew Armendariz for some $e^{2}=e\in R$.

\end{prop}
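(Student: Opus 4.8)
The plan is to treat the two implications separately, observing that the reverse implication is essentially a direct-product decomposition while the forward implication splits into a trivial ``existence of $e$'' part and the genuine content, namely that a central $\alpha$-skew Armendariz ring is abelian.

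For the reverse direction, suppose $R$ is abelian and that $eR$ and $(1-e)R$ are central $\alpha$-skew Armendariz for some idempotent $e$. Since $R$ is abelian, $e$ is central, so $r\mapsto(er,(1-e)r)$ is a ring isomorphism $R\cong eR\times(1-e)R$. Because $\alpha(e)=e$ we have $\alpha(eR)\subseteq eR$ and $\alpha((1-e)R)\subseteq(1-e)R$, so $\alpha$ restricts to endomorphisms of the two corners and corresponds, under the isomorphism, to the product endomorphism $\bar\alpha$. Thus by the displayed product result preceding this proposition $eR\times(1-e)R$ is central $\bar\alpha$-skew Armendariz, and by the isomorphism-invariance Proposition above $R$ is central $\alpha$-skew Armendariz.

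For the forward direction, assume $R$ is central $\alpha$-skew Armendariz. The existence of a suitable idempotent is immediate: taking $e=1$ gives $eR=R$ and $(1-e)R=0$, both central $\alpha$-skew Armendariz. The real task is to show $R$ is abelian, i.e. that every idempotent $e$ is central; we may assume $0\neq e\neq1$. Fix $r\in R$ and set $u=er(1-e)$, so $u^{2}=0$, $eu=u$, $ue=0$, $u(1-e)=u$, $(1-e)u=0$. The crucial observation is that, because $\alpha$ fixes $e$ and $1-e$, we have $\alpha(u)=\alpha(e)\alpha(r)\alpha(1-e)=e\,\alpha(r)\,(1-e)$, so $\alpha(u)$ stays in the same corner and $u\alpha(u)=er(1-e)\cdot e\,\alpha(r)(1-e)=0$ since $(1-e)e=0$. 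Using this I would verify that
\[ f(x)=e+u\,x, \qquad g(x)=(1-e)-u\,x \]
are nonzero elements of $R[x;\alpha]$ with $f(x)g(x)=0$: the constant and the $x^{2}$ coefficients vanish by $e(1-e)=0$ and $u\alpha(u)=0$, and the $x$-coefficient is $-eu+u(1-e)=-u+u=0$. The central $\alpha$-skew Armendariz hypothesis then forces the cross term $u\alpha(1-e)=u$ to lie in $C(R)$; but a central $u$ satisfies $ue=eu$, i.e. $0=u$, so $er(1-e)=0$. A symmetric construction with $v=(1-e)re$, taking $f(x)=(1-e)+v\,x$ and $g(x)=e-v\,x$, gives $v\in C(R)$ and hence $v=0$. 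Therefore $er=ere=re$ for all $r$, so $e$ is central and $R$ is abelian.

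The main obstacle is the abelianness in the forward direction, and within it the skew twist: unlike the ordinary central Armendariz case, the coefficient $u$ attached to $x$ pulls $\alpha(u)$ into $fg$, and one must ensure $u\alpha(u)=0$. This is exactly where the hypothesis $\alpha(e)=e$ for idempotents is used, guaranteeing that $\alpha(u)=e\,\alpha(r)(1-e)$ remains in the ``$e$-left, $(1-e)$-right'' corner so that $(1-e)e=0$ annihilates the product. Everything else is bookkeeping: the reverse direction is a formal consequence of the product result and isomorphism invariance, and deducing $u=0$ (respectively $v=0$) from centrality is immediate.
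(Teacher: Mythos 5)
Your proof is correct and follows essentially the same route as the paper: the abelianness argument uses the same pair of polynomials $e+er(1-e)x$ and $(1-e)-er(1-e)x$ (up to sign), and your converse is the paper's Peirce decomposition $R=eR\oplus(1-e)R$, merely packaged through the stated product result and the isomorphism-invariance proposition instead of a direct coefficient computation. Your explicit verification that $u\alpha(u)=0$, which is where $\alpha(e)=e$ enters, makes precise a step the paper leaves implicit.
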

\begin{proof}
If $R $ is central $\alpha$-skew Armendariz, $eR$ and $(1-e)R$ are central $\alpha$-skew Armendariz since they are the invariant subrings of $R$. Now let $ e $ be an idempotent in $ R $. Consider $ f(x)= e-er(1-e)x $ and $ g(x)= (1-e)+er(1-e)x $. Since $ \alpha(e)=e $ we have $ f(x)g(x)=0 $. By hypothesis $ er(1-e) $ is central and so 
$ er(1-e)=0 $. Hence $ er=ere $  for each $ r\in R $. 
Similarly consider $ p(x)=(1-e)- (1-e) rex $ and $ q(x)= e+(1-e)rex $ in $ R[x;\alpha] $ for all $ r\in R $. Then $ p(x)q(x)=0 $. As before $ (1-e)re=0 $ and $ ere=re $ for all $ r\in R $. It follows that $ e $ is central element of $ R $, that is, $ R  $ is abelian. Conversely, let  $f(x)=\sum_{i=0}^{n}a_{i}x^{i}$ and $g(x)=\sum_{j=0}^{m}a_{j}x^{j}$ be nonzero polynomials in $R[x;\alpha] $ such that $f(x)g(x)=0$. Let 
$f_{1}(x)=ef(x) $, $g_{1}(x)= eg(x)$, $\quad f_{2}(x)=(1-e)f(x) $, $g_{2}(x)= (1-e)g(x)$.
Then 
$f_{1}(x)g_{1}(x)=0$ in $(eR)[x;\alpha ]$ and 
$f_{2}(x)g_{2}(x)=0$ in $((1-e)R)[x;\alpha ]$.
Since $eR$ and $(1-e)R$ are central $\alpha$-skew Armendariz, 
$ea_{i}e\alpha^{i}(b_{j})$ is central in $eR$ and $(1-e)a_{i}(1-e)\alpha^{i}(b_{j})$ is central in $(1-e)R$. Since $e$ and $1-e$ are central in 
$R$, $R=eR\oplus(1-e)R$ and so 
$a_{i}\alpha^{i}(b_{j})= ea_{i}\alpha^{i}(b_{j}) + (1-e)a_{i}\alpha^{i}(b_{j})$
is central in $R$ for all 
$0\leqslant i\leqslant n$, $0\leqslant j\leqslant m$. Therefore $R$ is central $\alpha $-skew Armendariz.
\end{proof}
\begin{cor}\label{cor1}
Let $ \alpha $ be an endomorphism of a ring $ R $ with $ \alpha (e)=e $ for any $ e^{2}=e \in R $, if $ R $  is $ \alpha $-skew Armendariz ring then $ R  $ is abelian.
\end{cor}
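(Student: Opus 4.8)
The plan is to deduce this corollary directly from Proposition \ref{prop1}, the work having essentially been done there. The one thing I need to supply is the observation that the hypothesis of the corollary is a special case of the hypothesis of Proposition \ref{prop1}: every $\alpha$-skew Armendariz ring is central $\alpha$-skew Armendariz. Indeed, if $f(x)g(x)=0$ for nonzero $f(x)=\sum_{i=0}^{n}a_{i}x^{i}$ and $g(x)=\sum_{j=0}^{m}b_{j}x^{j}$ in $R[x;\alpha]$ forces $a_{i}\alpha^{i}(b_{j})=0$ for all $i,j$, then in particular $a_{i}\alpha^{i}(b_{j})=0\in C(R)$, which is exactly the condition of Definition \ref{def 1.1}. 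This containment is already recorded in the remark following Definition \ref{def 1.1}, so I would simply cite it.

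Granting this, the argument is immediate. Since $R$ is $\alpha$-skew Armendariz, it is central $\alpha$-skew Armendariz, and by hypothesis $\alpha(e)=e$ for every idempotent $e\in R$. Proposition \ref{prop1} then applies, and its forward implication yields at once that $R$ is abelian, which is precisely the assertion to be proved.

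If a self-contained proof were preferred over a citation, I would transcribe the relevant half of the proof of Proposition \ref{prop1}, which is even cleaner in the non-central setting: for a fixed idempotent $e$ and arbitrary $r\in R$, take $f(x)=e-er(1-e)x$ and $g(x)=(1-e)+er(1-e)x$, use $\alpha(e)=e$ to verify $f(x)g(x)=0$, and read off from the $\alpha$-skew Armendariz property the product $a_{0}\alpha^{0}(b_{1})=e\cdot er(1-e)=er(1-e)=0$; symmetrically, with $p(x)=(1-e)-(1-e)rex$ and $q(x)=e+(1-e)rex$ one gets $(1-e)re=0$. Together these give $er=ere=re$ for all $r$, so $e$ is central. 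I do not anticipate any genuine obstacle here: the entire content resides in Proposition \ref{prop1}, and the corollary is just the specialization obtained by strengthening ``central $\alpha$-skew Armendariz'' to ``$\alpha$-skew Armendariz.''
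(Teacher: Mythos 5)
Your proposal is correct and matches the paper's intent exactly: the corollary is stated without proof precisely because it follows immediately from the remark after Definition \ref{def 1.1} (every $\alpha$-skew Armendariz ring is central $\alpha$-skew Armendariz) together with the forward implication of Proposition \ref{prop1}. Your optional self-contained version is just the relevant half of the proof of Proposition \ref{prop1} and is also correct.
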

\begin{rem}
In  \cite[Example 2.2]{a01} shows that abelian rings need not to be central Armendariz in general. Clearly, for any ring $R$ and endomorphism $\alpha= I_{R}$ the abelian rings  in general are not central $\alpha$-skew Armendariz.
\end{rem}
\begin{lem}\label{Lem1}
Let $R$ be a central $\alpha$-skew Armendariz ring. If $e^{2}=e \in R[x;\alpha]$, where $e=e_{0}+e_{1}x+\cdots +e_{n}x^{n}$, then
$e_{i}\in C(R)$ for $i=1,2,\cdots , n$. Moreover if $\alpha(e)=e$, then $e=e_{0}$.
\end{lem}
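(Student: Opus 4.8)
The plan is to exploit the idempotency of $e$ in the only two ways it can be read off a vanishing product, namely $e(1-e)=0$ and $(1-e)e=0$, and to feed each of these into the defining property of a central $\alpha$-skew Armendariz ring. First I would dispose of the degenerate cases: if $e=0$ or $e=1$ then every higher coefficient vanishes and there is nothing to prove, so I may assume that both $e$ and $1-e$ are nonzero polynomials in $R[x;\alpha]$, which is exactly what is needed to invoke Definition \ref{def 1.1}.

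Next, writing $e=\sum_{i=0}^{n}e_i x^i$ and $1-e=(1-e_0)-\sum_{j\ge 1}e_j x^j$, I would apply the central $\alpha$-skew Armendariz property to $e\,(1-e)=0$ and keep only the terms arising from the constant coefficient of the right-hand factor. Using that $\alpha$ is unital, so $\alpha^i(1)=1$, this yields $e_i\,\alpha^i(1-e_0)=e_i-e_i\alpha^i(e_0)\in C(R)$ for every $i$. Applying the property instead to $(1-e)\,e=0$, and again isolating the constant coefficient of the right factor, gives $e_i\,\alpha^i(e_0)\in C(R)$ for every $i\ge 1$ (the sign is irrelevant since $C(R)$ is an additive subgroup). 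Adding the two memberships cancels the term $e_i\alpha^i(e_0)$ and leaves $e_i\in C(R)$ for $i=1,\dots,n$, which is the first assertion.

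For the ``moreover'' clause I would use the hypothesis $\alpha(e)=e$, read coefficientwise as $\alpha(e_i)=e_i$ and hence $\alpha^i(e_j)=e_j$ for all $i,j$, to collapse the coefficient comparison forced by $e^2=e$. Since $x$ then commutes with every coefficient of $e$, equating coefficients of $x^k$ gives $\sum_{i+j=k}e_ie_j=e_k$ for all $k$; the case $k=0$ says $e_0^2=e_0$. I would then induct on $k\ge 1$: assuming $e_1=\cdots=e_{k-1}=0$, every mixed term drops out and the relation reduces to $e_0e_k+e_ke_0=e_k$, which by the centrality of $e_k$ established above reads $2e_0e_k=e_k$. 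Left-multiplying by the idempotent $e_0$ forces $e_0e_k=0$, whence $e_k=0$; this completes the induction and shows $e=e_0$.

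The one genuinely load-bearing idea is that a single factorization is not enough: the product $e(1-e)=0$ controls $e_i-e_i\alpha^i(e_0)$ but says nothing about $e_i\alpha^i(e_0)$ on its own, so the symmetric product $(1-e)e=0$ must be brought in to supply the missing central element, after which the two combine additively to isolate $e_i$. Beyond that, the only points requiring mild care are the unitality of $\alpha$ (to guarantee $\alpha^i(1)=1$) and the reading of $\alpha(e)=e$ as fixing each coefficient; neither should present real difficulty.
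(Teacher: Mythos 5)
Your proof is correct, and it is worth comparing with the paper's. For the first assertion the two arguments are the same in spirit --- both exploit $e(1-e)=0$ and $(1-e)e=0$ and add two central elements to isolate $e_i$ --- but you pair the $i$-th coefficient of the left factor with the \emph{constant} coefficient of the right factor, obtaining $e_i-e_i\alpha^{i}(e_0)\in C(R)$ and $e_i\alpha^{i}(e_0)\in C(R)$, whereas the paper pairs the \emph{constant} coefficient of the left factor with the $j$-th coefficient of the right, obtaining $e_0e_j\in C(R)$ and $(1-e_0)e_j\in C(R)$. The paper's choice has the small advantage that only $\alpha^{0}$ appears, so it never needs $\alpha^{i}(1)=1$; your version does need $\alpha$ unital, as you note, though this is harmless since $R[x;\alpha]$ can only have $1$ as its identity when $\alpha(1)=1$. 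You also correctly dispose of the cases $e=0$ and $e=1$, which the definition's ``nonzero polynomial'' clause technically requires and which the paper passes over. The real divergence is in the ``moreover'' clause: the paper invokes Proposition \ref{prop1} to get that $R$ is abelian and then cites Theorem 2.9 of Chen--Tong, while you give a short self-contained induction on $k$ using $\sum_{i+j=k}e_ie_j=e_k$, the centrality of $e_k$ for $k\geqslant 1$, and left multiplication by the idempotent $e_0$ to force $e_0e_k=0$ and hence $e_k=0$. Your route is more elementary and avoids a slightly delicate citation (the paper's appeal to Proposition \ref{prop1} presumes $\alpha$ fixes \emph{all} idempotents of $R$, which is not literally what the lemma hypothesizes), so it is a genuine improvement in self-containment at no extra cost.
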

\begin{proof}
Since
 $e(1-e)=0=(1-e)e$, we have $(e_{0}+e_{1}x+\cdots +e_{n}x^{n})(1-e_{0}+e_{1}x+\cdots +e_{n}x^{n})=0$ and
 $(1-e_{0}+e_{1}x+\cdots +e_{n}x^{n})(e_{0}+e_{1}x+\cdots +e_{n}x^{n})=0$. Since $R$ is a central $\alpha$-skew Armendariz ring, 
$e_{0}e_{i}\in C(R)$ 
and 
$(1-e_{0})e_{i}\in C(R)$ for $1\leqslant i\leqslant n$. Thus $e_{i}\in C(R) $ for $1\leqslant i\leqslant n$. Now let $R$ is a central $\alpha$-skew Armendariz ring and $\alpha(e)=e$. By Proposition \ref{prop1}, it follows that $R$ is abelian. The rest follows from Theorem 2.9 in \cite{a3} 
\end{proof}
\begin{prop}
Let $R$
be a central $\alpha$-skew Armendariz ring. Then 
$R[x;\alpha]$ is abelian if and only if
$\alpha(e)=e$ for any $e^{2}=e\in R$.
\end{prop}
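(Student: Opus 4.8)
The plan is to prove both implications, with the forward direction (necessity of $\alpha(e)=e$) being essentially immediate and the reverse direction (sufficiency) resting on the two structural results already established.

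For the forward direction, I would assume $R[x;\alpha]$ is abelian and take any $e\in R$ with $e^{2}=e$. Viewed as a constant polynomial, $e$ is an idempotent of $R[x;\alpha]$, hence central there by hypothesis; in particular it commutes with the variable $x$. Since $xe=\alpha(e)x$ in the skew polynomial ring while $ex$ is the monomial with coefficient $e$ in degree one, equating the degree-one coefficients of $ex=xe$ yields $e=\alpha(e)$. This half uses nothing beyond the skew multiplication rule $xe=\alpha(e)x$ and the definition of an abelian ring; notice the central $\alpha$-skew Armendariz hypothesis is not even needed here, which flags that the real work lies in the converse.

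For the reverse direction, I would assume $\alpha(e)=e$ for every idempotent $e\in R$ and let $E=e_{0}+e_{1}x+\cdots+e_{n}x^{n}$ be an arbitrary idempotent of $R[x;\alpha]$. By Lemma \ref{Lem1}, since $R$ is central $\alpha$-skew Armendariz and $\alpha$ fixes every idempotent of $R$, the idempotent $E$ collapses to its constant term $E=e_{0}\in R$, and $e_{0}$ is then an idempotent of $R$. By Proposition \ref{prop1} the same hypotheses force $R$ to be abelian, so $e_{0}\in C(R)$ and therefore $e_{0}$ commutes with every element of $R$. It then remains only to check that $e_{0}$ commutes with $x$: because $e_{0}$ is an idempotent of $R$ with $\alpha(e_{0})=e_{0}$, we get $xe_{0}=\alpha(e_{0})x=e_{0}x$. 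Hence $e_{0}$ is central in $R[x;\alpha]$, and since $E$ was an arbitrary idempotent, $R[x;\alpha]$ is abelian.

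I expect the only genuine obstacle to be the reduction of a general idempotent of the skew polynomial ring to a constant in $R$; this is precisely the ``moreover'' clause of Lemma \ref{Lem1}, and once it is invoked the remaining verifications---centrality of $e_{0}$ in $R$ via Proposition \ref{prop1}, and commutation with $x$ via the $\alpha$-invariance $\alpha(e_{0})=e_{0}$---are routine coefficient-level checks.
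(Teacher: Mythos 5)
Your proof is correct and follows essentially the same route as the paper's: the forward direction via the skew relation $xe=\alpha(e)x$, and the converse by combining Proposition \ref{prop1} (to get $R$ abelian) with the ``moreover'' clause of Lemma \ref{Lem1} (to reduce an idempotent of $R[x;\alpha]$ to a constant), then checking commutation with $x$. No gaps.
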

\begin{proof}
Suppose that 
$R[x;\alpha]$
is abelian and
$e^{2}=e \in R[x;\alpha]$. Then $e$ is central and so
$ex=xe=\alpha(e)x$. Thus 
$\alpha(e)=e$.\\
Conversely, let
$\alpha(e)=e$ for any $e^{2}=e\in R$. Since $R$ is central $\alpha$-skew Armendariz by Proposition \ref{prop1}, $R$ is abelian. Now we suppose that 
$e^{2}=e\in R[x;\alpha]$. By Lemma \ref{Lem1}, $e$ is an idempotent in $R$. For any 
$p=a_{0}x^{k}+a_{1}x^{k+1}+\cdots +a_{m}x^{k+m} \in R[x;\alpha]$ where 
$k$ and $m$ are nonnegative integers,
$pe=(a_{0}x^{k}+a_{1}x^{k+1}+\cdots +a_{m}x^{k+m})e=a_{0}\alpha^{k}(e) x^{k}+a_{1}\alpha^{k+}(e)x^{k+1}+\cdots +a_{m}\alpha^{k+m}(e)x^{k+m}=e(a_{0}x^{k}+a_{1}x^{k+1}+\cdots +a_{m}x^{k+m})=ep$,
 since $R$ is abelian and $\alpha(e)=e$. Therefore $R[x;\alpha]$  is abelian.     
\end{proof}
For a nonempty subset $X$ of a ring $R$, we write $r_{R}(X)=\lbrace  r\in R | xr=0 ~ for ~ any~ x\in X \rbrace$ which is called right annihilator of $X$ in $R$.
Kaplansky \cite{a01} introduced Baer rings as rings in which the right annihilator of every nonempty subset is generated by an idempotent. 
As a generalization of Baer rings, a ring $R$ is called a right (resp., left) $p.p$-ring if the right (resp., left) annihilator of an element of $R$ is generated (as  a right (resp., left) ideal) by an idempotent of $R$.
 \begin{thm}\label{THM1}
 For an endomorphism $ \alpha $ of a ring $ R $, if the ring $ R $ is $ \alpha $-skew Armendariz, then $R$ is central $\alpha$-skew Armendariz. The converse hold if $R$ is right p.p-ring and  $ \alpha(e)=e $ for any $ e^{2}=e \in R $. 
 \end{thm}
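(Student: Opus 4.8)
The forward implication is immediate and requires no hypotheses on $R$: if $R$ is $\alpha$-skew Armendariz and $f(x)g(x)=0$, then $a_i\alpha^i(b_j)=0$ for all $i,j$, and since $0\in C(R)$ this already yields $a_i\alpha^i(b_j)\in C(R)$, so $R$ is central $\alpha$-skew Armendariz. All the real work is in the converse, so assume $R$ is central $\alpha$-skew Armendariz, right p.p., and $\alpha(e)=e$ for every $e^2=e\in R$. The plan is to first compress the hypotheses into two clean ring-theoretic facts and then run a twisted version of the classical reduced-ring Armendariz induction.

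First I would establish that $R$ is reduced. By Proposition \ref{prop1} the ring $R$ is abelian, so every idempotent is central. If $a^2=0$, then $a\in r_R(a)=eR$ for an idempotent $e$ furnished by the p.p. hypothesis, which is central by abelianness; writing $a=ea$ and using $ae=0$ together with the centrality of $e$ gives $a=ea=ae=0$. Hence $R$ is reduced, and in particular $uv=0\Leftrightarrow vu=0$.

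The key technical step---and the place where the skew twist could obstruct everything---is to show that $\alpha$ is invisible to zero-products, namely that $uv=0$ implies $u\alpha^t(v)=0$ for all $t\ge 0$ (and, via the reduced symmetry, $\alpha^s(u)v=0$ as well). This is where $\alpha(e)=e$ earns its keep: if $uv=0$, then $v\in r_R(u)=eR$ with $e$ a central idempotent and $ue=0$, so $v=ev$ and $\alpha(v)=\alpha(e)\alpha(v)=e\alpha(v)$, whence $u\alpha(v)=(ue)\alpha(v)=0$; iterating handles all powers of $\alpha$, and combining with reducedness yields the two-sided transfer rules. These rules are exactly what will let me carry $\alpha^i$ across products in the induction without losing control.

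With these tools in hand I would prove the coefficient annihilation in two stages. Writing the relations $\sum_{i+j=k}a_i\alpha^i(b_j)=0$, I first show $a_0b_j=0$ for all $j$ by induction on $j$: multiplying the degree-$j$ relation on the right by $a_0$ and using $\alpha^i(b_{j-i})a_0=0$ (which follows from the inductive hypotheses $a_0b_{j-i}=0$ via the transfer rules above) kills every term except $a_0b_ja_0$, and then $(b_ja_0)^2=b_j(a_0b_ja_0)=0$ forces $a_0b_j=0$ by reducedness. Consequently $a_0g(x)=0$. I then factor $f(x)=a_0+f_1(x)x$ with $f_1(x)=\sum_{i=1}^{n}a_ix^{i-1}$, so that $f_1(x)\,(xg(x))=0$; since $\deg f_1<\deg f$ and $xg(x)=\sum_j\alpha(b_j)x^{j+1}$, an induction on $\deg f$ applied to this shorter product gives $a_i\alpha^i(b_j)=0$ for all $i\ge 1$ and all $j$. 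Together with the $i=0$ case this yields $a_i\alpha^i(b_j)=0$ for all $i,j$, so $R$ is $\alpha$-skew Armendariz. The main obstacle throughout is the bookkeeping of the $\alpha$-twist, which is precisely neutralised by the transfer lemma; once that is available, the argument is a faithful skew analogue of the reduced-ring Armendariz proof.
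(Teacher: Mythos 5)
Your proof is correct, but it reaches the converse by a genuinely different route than the paper. The paper, after invoking Proposition \ref{prop1} to get abelianness, works directly with the p.p.\ idempotents: writing $r(a_i)=e_iR$, it multiplies the convolution equations successively on the right by $e_0,e_1,\dots$, using that each $e_i$ is central and $\alpha$-fixed to slide it past coefficients and past $\alpha$, and kills the products $a_i\alpha^i(b_j)$ one diagonal at a time (the final step is only indicated by ``continuing this process''). You instead compress the hypotheses into two structural facts --- that an abelian right p.p.\ ring is reduced, and that $\alpha(e)=e$ makes zero products $\alpha$-stable, i.e.\ $uv=0$ implies $u\alpha^t(v)=0$ and, by reducedness, $\alpha^s(u)v=0$ --- and then run the classical reduced-ring Armendariz induction: multiply the degree-$j$ relation on the right by $a_0$, use the square trick $(b_ja_0)^2=0$ to get $a_0b_j=0$, peel off $a_0$, and induct on $\deg f$. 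Your version buys a cleaner and more visibly complete induction (the paper's ellipsis is doing real work) and isolates exactly where each hypothesis enters; the paper's version avoids the detour through reducedness and stays closer to Armendariz's original p.p.-ring computation. Both arguments use the central $\alpha$-skew Armendariz hypothesis only through Proposition \ref{prop1}.
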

 \begin{proof}
 Suppose $R$ is a central $\alpha$-skew Armendariz and right p.p-ring. Then by Proposition \ref{prop1}, $R$ is abelian. Let 
 $f(x)=\sum _{i=0}^{n}a_{i}x^{i}$ and $ g(x)= \sum _{j=0}^{m}b_{j}x^{j} $, $f(x)g(x)=0$. We have
 \begin{eqnarray}
 a_{0}b_{0}=0\\
 a_{0}b_{1}+a_{1}\alpha(b_{0})=0\\
   a_{0}b_{2}+a_{1}\alpha(b_{1})+a_{2}\alpha^{2}(b_{0})=0
 \end{eqnarray}
 By hypothesis there exists idempotent $ e_{i}\in R $ such that $ r(a_{i} ) =e_{i}R$, for all $ i $. Therefore $ b_{0}=e_{0}b_{0} $ and  $ a_{0}e_{0}=0 $. Multiplying (2) by $ e_{0} $ from the right, then 
 $ 0=a_{0}b_{1}e_{0}+a_{1}\alpha(b_{0})e_{0}=a_{0}e_{0}b_{1}+a_{1}\alpha(b_{0})e_{0}=a_{1}\alpha(b_{0})e_{0} $. Hence 
 $ a_{1}\alpha(b_{0})=a_{1}\alpha(b_{0}e_{0})=0 $.
 By (2) 
 $ a_{0}b_{1}=0 $ and so 
 $ b_{1}=e_{0}b_{1} $. Again multiplying (3) from the right by 
 $ e_{0} $, presents
  \begin{align*}
0 &=a_{0}b_{2}e_{0}+a_{1}  \alpha(b_{1}) e_{0}+a_{2}\alpha^{2}(b_{0})e_{0}\\
&=a_{0}b_{2}e_{0}+a_{1}\alpha(b_{1})\alpha(e_{0})+a_{2}\alpha^{2}(b_{0})\alpha^{2}(e_{0})\\
&=a_{0}e_{0}b_{2}+a_{1}\alpha(e_{0}b_{1})+a_{2}\alpha^{2}(e_{0}b_{0}) \\
&=a_{1}\alpha(b_{1}) + a_{2}\alpha^{2}(b_{0})
 \end{align*}
 Multiplying this equation from the right by $ e_{1} $, hence
  \begin{align*}
0 &=a_{1}\alpha(b_{1}) e_{1}+ a_{2}\alpha^{2}(b_{0})e_{1}\\
&=a_{1}e_{1}\alpha(b_{1})+a_{2}\alpha^{2}(b_{0})e_{1}\\
&=a_{2}\alpha(\alpha(b_{0}))\alpha(e_{1})\\
&=a_{2}\alpha(\alpha(b_{0})e_{1})\\
&=a_{2}\alpha(\alpha(b_{0}))\\
&=a_{2}\alpha^{2}(b_{0})
 \end{align*}
continuing this process, we have 
$ a_{i}\alpha^{i}(b_{j})=0 $
for all
$ 0\leqslant i\leqslant n $
and 
$ 0\leqslant j\leqslant m $.
Hence 
$ R $
is $\alpha$-skew Armendariz. This completes the proof.
 \end{proof}
In \cite[Example 2.3]{a01},  it is shown that the hypothesis that $R$ be right p.p-ring is essential in Theorem \ref{THM1} for the endomorphism $\alpha=I_{R}$.

\begin{cor}\label{THM2}
Assume that $\alpha$ is an automorphism of a ring $R$ with $\alpha(e)=e$ for any $e^{2}=e \in R$. If $R$ is a central $\alpha$-skew Armendariz ring, then $R$ is right p.p-ring if and only if $R[x;\alpha]$ is right p.p-ring.
\end{cor}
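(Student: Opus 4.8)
The plan is to prove both implications by describing principal right annihilators in $R[x;\alpha]$ explicitly and matching them with right annihilators in $R$. For the forward implication, suppose $R$ is right p.p. Since $R$ is central $\alpha$-skew Armendariz, right p.p., and satisfies $\alpha(e)=e$ for every idempotent $e$, Theorem \ref{THM1} upgrades $R$ to an $\alpha$-skew Armendariz ring, and Proposition \ref{prop1} tells us $R$ is abelian. I would then fix $f=\sum_{i=0}^{n}a_{i}x^{i}\in R[x;\alpha]$ and, for $g=\sum_{j=0}^{m}b_{j}x^{j}$, use the expansion $fg=\sum_{i,j}a_{i}\alpha^{i}(b_{j})x^{i+j}$ together with the $\alpha$-skew Armendariz property to show that $g\in r_{R[x;\alpha]}(f)$ holds exactly when $a_{i}\alpha^{i}(b_{j})=0$ for all $i,j$, that is, when every coefficient $b_{j}$ lies in $I:=\bigcap_{i=0}^{n}\alpha^{-i}(r_{R}(a_{i}))$ (invertibility of $\alpha$ being used here to pass from $\alpha^{i}(b_{j})\in r_{R}(a_{i})$ to $b_{j}\in\alpha^{-i}(r_{R}(a_{i}))$, each of which is a right ideal because $\alpha$ is an automorphism).

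The next step is to collapse $I$ to a single idempotent. Because $R$ is right p.p., write $r_{R}(a_{i})=e_{i}R$ with $e_{i}^{2}=e_{i}$. Since $\alpha(e_{i})=e_{i}$ and $\alpha$ is an automorphism, $\alpha^{-i}(e_{i}R)=e_{i}R$, whence $I=\bigcap_{i=0}^{n}e_{i}R$. As $R$ is abelian, each $e_{i}$ is central, so the $e_{i}$ commute and $e:=e_{0}e_{1}\cdots e_{n}$ is a central idempotent with $I=eR$. Being central in $R$ and fixed by $\alpha$, the element $e$ is central in $R[x;\alpha]$, so the description of $r_{R[x;\alpha]}(f)$ as the set of polynomials with all coefficients in $eR$ becomes $r_{R[x;\alpha]}(f)=eR[x;\alpha]$. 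Hence every principal right annihilator of $R[x;\alpha]$ is generated by an idempotent, and $R[x;\alpha]$ is right p.p.

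For the converse, suppose $R[x;\alpha]$ is right p.p. and take $a\in R$, regarded as a constant polynomial. Then $r_{R[x;\alpha]}(a)=ER[x;\alpha]$ for some idempotent $E\in R[x;\alpha]$. By Lemma \ref{Lem1} (invoking $\alpha(e)=e$ on idempotents of $R$), $E=e\in R$ is an idempotent. Intersecting with $R$ and using that $e$ is central with $\alpha(e)=e$, one checks $r_{R}(a)=r_{R[x;\alpha]}(a)\cap R=eR[x;\alpha]\cap R=eR$, so $R$ is right p.p.

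The main obstacle is the forward direction, specifically establishing the clean identity $r_{R[x;\alpha]}(f)=eR[x;\alpha]$ for a single idempotent $e$. This is exactly where all four hypotheses are used simultaneously: the $\alpha$-skew Armendariz property (obtained from Theorem \ref{THM1}) to decouple $fg=0$ into the scalar equations $a_{i}\alpha^{i}(b_{j})=0$; the automorphism hypothesis to invert $\alpha$ and keep each $\alpha^{-i}(r_{R}(a_{i}))$ a right ideal; the idempotent-fixing hypothesis to erase $\alpha^{-i}$ from those annihilators; and abelianness to merge the finite intersection $\bigcap_{i}e_{i}R$ into one $eR$. Once this identity is secured, both directions close immediately.
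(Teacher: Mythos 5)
Your argument is correct, and the converse direction (pull the idempotent generator of $r_{R[x;\alpha]}(a)$ down to $R$ via Lemma \ref{Lem1} and intersect with $R$) is exactly what the paper does. The difference is in the forward direction: the paper, after invoking Theorem \ref{THM1} to upgrade $R$ to an $\alpha$-skew Armendariz ring, simply cites \cite[Theorem 22]{alpha} to conclude that $R[x;\alpha]$ is right p.p., whereas you re-prove that implication from scratch --- computing $r_{R[x;\alpha]}(f)$ as the set of polynomials with coefficients in $\bigcap_{i}\alpha^{-i}(r_{R}(a_{i}))=\bigcap_{i}e_{i}R=e_{0}e_{1}\cdots e_{n}R$, using the automorphism hypothesis, $\alpha(e_{i})=e_{i}$, and abelianness (from Proposition \ref{prop1}) to collapse the intersection to a single central idempotent. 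This is essentially the classical Armendariz-style p.p.\ argument that underlies the cited theorem, so your proof buys self-containedness at the cost of length; the paper's version buys brevity at the cost of an external dependency. All the individual steps you outline (the equivalence $fg=0\Leftrightarrow a_{i}\alpha^{i}(b_{j})=0$, the identification of ``all coefficients in $eR$'' with $eR[x;\alpha]$ for $e$ central and $\alpha$-fixed, and $eR[x;\alpha]\cap R=eR$) check out.
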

\begin{proof}
Let $R$ be right p.p-ring. By Theorem \ref{THM1}, $R$ is $\alpha$-skew Armendariz. So the proof is done by
 \cite[Theorem 22]{alpha} . 
Conversely, assume that $R[x;\alpha]$ is a right p.p-ring. Let $a\in R$. By Lemma \ref{Lem1}, there exists an idempotent $e\in R$ such that
 $r_{R[x;\alpha]}(a)=eR[x;\alpha]$. Hence
 $r_{R}(a)=eR$. Therefore $R$ is a p.p-ring.
\end{proof}
Recall that if
$\alpha$
is an endomorphism of a ring $R$, then the map 
$R[x]\longrightarrow R[x]$
defined by 
$\sum _{i=0}^{m}a_{i}x^{i}\mapsto \sum _{i=0}^{m}\alpha(a_{i})x^{i}$
is an endomorphism of the polynomial ring 
$R[x]$
and clearly this map extends 
$\alpha$.
We shall also denote the extended map
$R[x]\longrightarrow R[x]$
by
$\alpha$
and the image of 
$f\in R[x]$
by
$\alpha(f)$.\\
Note that by \cite[Theorem 6]{alpha}, a ring $R$ is $\alpha$-skew Armendariz if and only if $R[x]$ is $\alpha$-skew Armendariz for an endomorphism $\alpha$ with $\alpha^{t}=I_{R}$ for some positive integer $t$. Similarly we have the following result.
 \begin{thm}
 Let $\alpha$ be an endomorphism of a ring $R$ and $\alpha^{t}=I_{R}$ for some positive integer $t$. Then $R$ is central $\alpha$-skew Armendariz if and only if $R[x]$ is central $\alpha$-skew Armendariz. 
 \end{thm}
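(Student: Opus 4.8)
The plan is to prove the two implications separately, the nontrivial content lying in showing that if $R$ is central $\alpha$-skew Armendariz then so is $R[x]$. For the easy direction, note that $R$ sits inside $R[x]$ as the constant polynomials, that this copy of $R$ is invariant under the extended endomorphism $\alpha$, and that $C(R[x])=C(R)[x]$: a polynomial commutes with every element of $R[x]$ exactly when each of its coefficients lies in $C(R)$, the variable $x$ being central. Hence if $R[x]$ is central $\alpha$-skew Armendariz, then its $\alpha$-invariant subring $R$ is central $\alpha$-skew Armendariz by the subring remark following Definition \ref{def 1.1}.

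For the converse, the key is to transport a vanishing product in $R[x][y;\alpha]$ down into $R[y;\alpha]$ by a substitution that collapses the skew twist using $\alpha^{t}=I_{R}$, in analogy with the method of \cite{alpha}. First I would record that in $R[x][y;\alpha]$ the indeterminate $x$ is central: since $\alpha$ fixes $x$, one has $yx=\alpha(x)y=xy$, so $x$ commutes with $R$, with itself, and with $y$. Next, for a positive integer $k$ to be chosen, I define $\psi:R[x][y;\alpha]\longrightarrow R[y;\alpha]$ by $\psi|_{R}=\mathrm{id}$, $\psi(x)=y^{tk}$ and $\psi(y)=y$, and I verify that $\psi$ is a well-defined ring homomorphism. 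The element $y^{tk}$ is central in $R[y;\alpha]$, because $y^{tk}r=\alpha^{tk}(r)y^{tk}=ry^{tk}$ as $\alpha^{tk}=(\alpha^{t})^{k}=I_{R}$; thus $R\hookrightarrow R[y;\alpha]$ together with $x\mapsto y^{tk}$ extends to a ring homomorphism on $R[x]$. The skew relation is preserved since for $a\in R$ one computes $\psi(y)\psi(ax^{l})=y\,a\,y^{tkl}=\alpha(a)y^{tkl+1}=\psi(\alpha(a)x^{l})\psi(y)=\psi(\alpha(ax^{l}))\psi(y)$.

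Now I would apply $\psi$ to a given relation $f(y)g(y)=0$ with $f=\sum_{i=0}^{n}p_{i}y^{i}$, $g=\sum_{j=0}^{m}q_{j}y^{j}$, and $p_{i}=\sum_{s}a_{is}x^{s}$, $q_{j}=\sum_{u}b_{ju}x^{u}$. Then $\psi(f)=\sum_{i,s}a_{is}y^{tks+i}$ and $\psi(g)=\sum_{j,u}b_{ju}y^{tku+j}$ in $R[y;\alpha]$, with $\psi(f)\psi(g)=0$. Choosing $k$ with $tk>\max(n,m)$ makes the exponents $tks+i$ pairwise distinct (and likewise $tku+j$), so $a_{is}$ is exactly the coefficient of $y^{tks+i}$ in $\psi(f)$, the images $\psi(f),\psi(g)$ are nonzero whenever $f,g$ are, and the central $\alpha$-skew Armendariz property of $R$ applies to give $a_{is}\,\alpha^{tks+i}(b_{ju})\in C(R)$ for all indices. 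Since $\alpha^{tks}=I_{R}$, this reads $a_{is}\alpha^{i}(b_{ju})\in C(R)$. Finally, because $x$ is central and $\alpha$ fixes $x$, one has $p_{i}\alpha^{i}(q_{j})=\sum_{s,u}a_{is}\alpha^{i}(b_{ju})x^{s+u}\in C(R)[x]=C(R[x])$, which is precisely the assertion that $R[x]$ is central $\alpha$-skew Armendariz.

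I expect the main obstacle to be purely the bookkeeping around the substitution: verifying that $\psi$ is a genuine ring homomorphism (centrality of $y^{tk}$ and preservation of the skew commutation relation) and choosing $k$ large enough that the monomials $a_{is}y^{tks+i}$ do not collide, so that one may legitimately read off the $a_{is}$ and $b_{ju}$ as coefficients and invoke the central $\alpha$-skew Armendariz hypothesis on $R$. The role of $\alpha^{t}=I_{R}$ is exactly to make $\alpha^{tks}$ trivial, so that the twist introduced by the substitution disappears in the final line.
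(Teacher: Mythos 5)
Your proof is correct and follows essentially the same strategy as the paper: both reduce the vanishing product in $R[x][y;\alpha]$ to a vanishing product of single-variable polynomials over $R$ by a degree-separating substitution, and both use $\alpha^{t}=I_{R}$ to cancel the extra twist $\alpha^{tks}$ so that the central $\alpha$-skew Armendariz hypothesis on $R$ can be read off coefficientwise. The only cosmetic difference is the packaging: you send $x\mapsto y^{tk}$ via a ring homomorphism into $R[y;\alpha]$, while the paper substitutes $x\mapsto x^{t}$ in the coefficients and attaches the powers $x^{itk+i}$ to build explicit polynomials $f(x),g(x)\in R[x;\alpha]$ — the same exponent-separation trick either way.
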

\begin{proof}
Assume that $R[x]$ is central $\alpha$-skew Armendariz. Then $R$ is central $\alpha$-skew Armendariz az a subring of $R[x]$.
Conversely, assume that $R $ is central $\alpha$-skew Armendariz. Suppose that 
$ p(y)= f_{0}+f_{1}y+\ldots +f_{m}y^{m} $; $q(y)=g_{0}+g_{1}y+\ldots +g_{n}y^{n} $ in $R[x][y;\alpha]$ and
$p(y)q(y)=0$.
We also let $f_{i}=a_{i_{0}}+a_{i_{1}}x+\ldots+a_{i \omega_{i}}x^{\omega_{i}}$ and 
$g_{j}=b_{j_{0}}+b_{j_{1}}x+\ldots+b_{j \nu_{j}}x^{\nu_{j}}$
for each $ 0\leqslant i\leqslant m $
and 
$ 0\leqslant j\leqslant n $, where $a_{i_{0}}, \ldots ,a_{i \omega_{i}}, b_{j_{0}}, \ldots ,a_{j \nu_{j}}\in R$ .
We claim that 
$f_{i}\alpha^{i}(g_{j})\in C(R[x])$, for all  $ 0\leqslant i\leqslant m $
and 
$ 0\leqslant j\leqslant n $.
Take a positive integer $k$ such that 
$k\geqslant  deg (f_{0}(x))+ deg(f_{1}(x))+\cdots +deg(f_{m}(x)) +deg(g_{0}(x))+deg(g_{1}(x))\cdots +deg(g_{n}(x))$,
where the degree is as polynomials in $R[x]$ and the degree of zero polynomial is taken to be $0$.
Since
$p(y)q(y)=0 $ in 
$R[x][y;\alpha]$, we have

\begin{align}\label{4}
\begin{cases}
   f_{0}(x)g_{0}(x)=0\\
f_{0}(x)g_{1}(x)+f_{1}(x)\alpha(g_{0}(x))=0\\
\vdots\\
f_{m}(x)\alpha^{m}(g_{n}(x))=0
\end{cases}
\end{align}

in $R[x]$. Now put
\begin{align}\label{5}
f(x)=f_{0}(x^{t})+f_{1}(x^{t})x^{tk+1}+f_{2}(x^{t})x^{2tk+2}+\cdots +f_{m}(x^{t})x^{mtk+m}
\end{align}
\begin{align*}
g(x)=g_{0}(x^{t})+g_{1}(x^{t})x^{tk+1}+g_{2}(x^{t})x^{2tk+2}+\cdots +g_{n}(x^{t})x^{ntk+n}.
\end{align*}
Note that 
$\alpha^{t}=I_{R}$, Then
\begin{center}
$f(x)g(x)=f_{0}(x^{t})g_{0}(x^{t})+\big( f_{0}(x^{t})g_{1}(x^{t})+f_{1}(x^{t})\alpha(g_{0}(x^{t}))\big) x^{tk+1}+\cdots + f_{m}(x^{t})\alpha^{m}(g_{n}(x^{t}))x^{m+n(tk+1)}$
\end{center}
in $R[x;\alpha]$.  Using 
(\ref{4}) and
$\alpha^{t}=I_{R}$, we have 
$f(x)g(x)=0$ in
$R[x;\alpha]$.
In the other hand, from (\ref{5}) we have 
\begin{center}
$f(x)g(x)=(a_{00}+a_{01}x^{t}+\cdots +a_{0\omega_{0}}x^{\omega_{0} t}+
a_{10}x^{tk+1}+a_{11}x^{tk+t+1}+\cdots +a_{1\omega_{1}}x^{tk+\omega_{1} t+1}
+\cdots +
a_{m0}x^{mtk+m}+a_{m1}x^{mtk+t+m}+\cdots +a_{m\omega_{m}}x^{mtk+\omega_{m} t+m})
(b_{00}+b_{01}x^{t}+\cdots +b_{0\nu_{0}}x^{\nu_{0} t}+
b_{10}x^{tk+1}+b_{11}x^{tk+t+1}+\cdots +b_{1\nu_{1}}x^{tk+\nu_{1} t+1}
+\cdots +
b_{n0}x^{ntk+n}+b_{n1}x^{ntk+t+n}+\cdots +b_{n\omega_{n}}x^{ntk+\nu_{n} t+n})=0$
\end{center}
 Since $R$ is central $\alpha$-skew Armendariz and $\alpha^{t}=I_{R}$, so
 $a_{iu}\alpha^{i}(b_{iv})=a_{iu}\alpha^{itk+ut+i}(b_{jv}) \in C(R)$.
  for each 
 $ 0\leqslant i\leqslant m $,
$ 0\leqslant j\leqslant n $,
and
$u\in  \lbrace 0,1, \ldots ,\omega_{i} \rbrace$,
$v\in  \lbrace 0,1, \ldots ,\nu_{j}\rbrace$.
 Since $C(R)$ is closed under addition, we have
 $f_{i}(x^{t})\alpha^{i}(g_{j}(x^{t}))\in C(R[x])$ 
 for every
  $ 0\leqslant i\leqslant m $ and $ 0\leqslant j\leqslant n $.
 Now it is easy to see that
 $f_{i}(x)\alpha^{i}(g_{j}(x))\in C(R[x])$, 
 and hence $R[x]$ is central $\alpha$-skew Armendariz.
\end{proof}
Let 
$ R $ be a ring. 
For any integer 
$n\geqslant2$, consider the ring 
$M_{n}(R)$ of
$n\times n $ matrices and the ring $T_{n}(R)$ of $n\times n$ triangular matrices  over a ring $R$. 
The $n\times n$ identity matrix is
denoted by $ I_{n} $. For $n\geqslant 2$, let 
$\lbrace e_{i,j| 1\leqslant i,j\leqslant n}\rbrace$
be the set of the matrix units.
Let $\alpha:R\longrightarrow R$ be a ring endomorphism with $\alpha(1)=1$. For any
$A=(a_{i,j})\in M_{n}(R)$,
 we denote
 $\bar{\alpha}:  M_{n}(R)\longrightarrow  M_{n}(R)$
by
$\bar{\alpha}((a_{i,j})_{n\times n})=(\alpha(a_{i,j}))_{n\times n}$,
 and so $\bar{\alpha}$ is
a ring endomorphism of the ring $M_{n}(R)$.\\
The rings 
$M_{n}(R)$ and  $T_{n}(R)$ 
 contain non-central idempotents. Therefore they are not abelian. By Proposition \ref{prop1}, these rings are not central $I_{R}$-skew Armendariz.

Now we introduce a notation for some subring of $T_{n}(R)$ that will be central $\bar{\alpha}$-skew Armendariz. \\ 
Given a ring $R$ and $(R,R)$-bimodule $M$, the trivial extention of $R$ by $M$ is the ring $T(R,M)=R\oplus M$ with the usual addition and the following multiplication:
\begin{center}
$(r_{1},m_{1})(r_{2},m_{2})=(r_{1}r_{2},r_{1}m_{2}+m_{1}r_{2})$.
\end{center}
This is isomorphic to the ring of all matrices 
$\bigl( \begin{smallmatrix}
  r&m\\ 0&r
\end{smallmatrix} \bigr)$, 
where 
$r\in R$
and 
$m\in M$ and usual matrix operations are used.\\
For an endomorphism $\alpha$ of a ring $R$ and the trivial extention $T(R,R)$ of $R$,
$\bar{\alpha}:T(R,R)\longrightarrow T(R,R)$
defined by 
$\bar{\alpha}((a,b))=(\alpha(a),\alpha(b))$ is an endomorphism of 
$T(R,R)$.
Since 
$T(R,0)$
is isomorphic to $R$, we can describe the restriction of $\bar{\alpha}$ by $T(R,0)$ to 
$\alpha$.\\
If $R$ is an $\alpha$-rigid ring (i.e., $R[x;\alpha]$ is reduced) by \cite[Proposition 15]{alpha},  
$T(R,R)$
is 
$\bar{\alpha}$-skew Armendariz and so it is central $\bar{\alpha}$-skew Armendariz. But 
$T(R,R)$ need not to be $\bar{\alpha}$-rigid.\\
 It can be asked that if 
$T(R,R)$
is a central $\bar{\alpha}$-skew Armendariz ring, then $R $ is $\alpha$-rigid ring. But this is not the case.
\begin{examp}
Let $R=\mathbb{Z}_{4}$, where $\mathbb{Z}_{4}$ is the ring of integers modulo $4$. Then  $T(R,R)$ is a commutative ring and hence for $\alpha=I_{R}$ is central $\bar{\alpha}$-skew Armendariz. But  $R[x]$ is not reduced and so $R$ is not rigid by \cite[Proposition 3]{alpha}.

\end{examp}
Let $\alpha$ be an monomorphism of a ring $R$ and $R$ be a
reduced $\alpha$-skew Armendariz ring, then
\begin{center}
$ S= \Bigg\{\Bigl(\begin{smallmatrix}
  a & b & c \\
  0 & a & d \\
  0 &0 & a
\end{smallmatrix}\Bigr)| a,b,c,d \in R\Bigg\}$
\end{center}
is $\bar{\alpha}$-skew Armendariz by \cite[Corollary 5]{a4}, and so it is central $\bar{\alpha}$-skew Armendariz.
On the other hand by \cite[Proposition 17]{alpha}, if $\alpha$ is an endomorphism of a ring $R$ and $R$ is an $\alpha$-rigid ring then $S$ is $\bar{\alpha}$-skew Armendariz and so it is central $\alpha$-skew Armendariz.\\

For an ideal $I$ of $R$, if $\alpha(I) \subseteq I$ then $\bar{\alpha}:R/I\longrightarrow R/I$ defined by $\bar{\alpha}(a+I)=\alpha(a)+I$
is an endomorphism of a factor ring $R/I$.
The homomorphic image of a central $\alpha$-skew Armendariz ring need not be central $\alpha$-skew Armendariz. 
But by in \cite[Proposition 9]{alpha}, if for any $a\in R$, $a\alpha(a)\in I$ implies $a\in I$  then the factor ring $R/I$ is
$\bar{\alpha}$-skew Armendariz and so is central $\bar{\alpha}$-skew Armendariz.\\
Recall that a ring R is $\alpha$-compatible if for each $ a,b \in R$, $ab=0$ if and only if $a\alpha(b)=0$. Clearly, this may only happen when the endomorphism $\alpha$ is injective.
Note the following result.
\begin{thm}\label{thme14}
Let $\alpha$ be an endomorphism of a ring $R$ with $ \alpha(1)=1 $, $R$ be an $\alpha$-compatible ring and $I$ be an ideal of $R$ with
 $\alpha(I)\subseteq I$.
 If $I$ is reduced as a ring and $R/I $ is central $\alpha$-skew Armendariz ring, then for 
 $f(x)=\sum_{i=0}^{n}x^{i}$ and
$g(x)=\sum_{j=0}^{m}x^{j}\in R[x;\alpha]$, if
$f(x)g(x)=0$ and $a_{0}\in U(R) $, then $ R $ is central $ \alpha $-skew Armendariz.
\end{thm}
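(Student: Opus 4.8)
The plan is to show that every pair of nonzero $f(x)=\sum_{i=0}^{n}a_{i}x^{i}$ and $g(x)=\sum_{j=0}^{m}b_{j}x^{j}$ in $R[x;\alpha]$ with $f(x)g(x)=0$ forces $a_{i}\alpha^{i}(b_{j})\in C(R)$, by transporting the problem to the quotient $R/I$ and then lifting the conclusion back to $R$. First I would record the coefficient relations coming from the skew product, namely $\sum_{i+j=k}a_{i}\alpha^{i}(b_{j})=0$ for every $k$, together with the two structural tools I intend to exploit. One is $\alpha$-compatibility, which lets me convert any relation $uv=0$ freely into $u\alpha^{\ell}(v)=0$ and $\alpha^{\ell}(u)v=0$. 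The other is that, since $\alpha(I)\subseteq I$ and $R$ is $\alpha$-compatible, the ideal $I$ is a reduced, $\alpha$-compatible ring, hence $\alpha$-rigid; this is what will let me split relations termwise once the relevant products land inside $I$, and it gives the sharp cancellation $x\in I,\ xrx=0\Rightarrow xr=0$ (as $(xr)^{2}=(xrx)r=0$ and $xr\in I$).

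Next I would pass to $R/I$. Because $\alpha(I)\subseteq I$, the induced $\bar\alpha$ on $R/I$ is a well-defined endomorphism and $\bar f\bar g=0$ in $(R/I)[x;\bar\alpha]$. Since $R/I$ is central $\alpha$-skew Armendariz, this yields $\overline{a_{i}\alpha^{i}(b_{j})}\in C(R/I)$, equivalently $[a_{i}\alpha^{i}(b_{j}),r]\in I$ for all $r\in R$ and all $i,j$. This is exactly the datum I must upgrade: centrality modulo $I$ has to be promoted to genuine centrality in $R$.

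The lifting step is where the reduced ideal does the work, and I expect it to be the main obstacle. Writing $c=a_{i}\alpha^{i}(b_{j})$ and $u=[c,r]\in I$, the goal is to realize $u$ as a nilpotent element of the reduced ideal $I$ (say $u^{2}=0$ or $uRu=0$) and thereby conclude $u=0$. To manufacture such relations I would run the same kind of induction on $k$ used in Theorem \ref{THM1}: feed the equations $\sum_{i+j=k}a_{i}\alpha^{i}(b_{j})=0$ through $\alpha$-compatibility, multiply by the successive coefficients, and use the $\alpha$-rigidity of $I$ to peel off one diagonal at a time, forcing the corresponding commutators into the reduced regime of $I$ where they must vanish. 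The hypothesis $a_{0}\in U(R)$ serves to remove the ambiguity at the bottom of this induction, pinning down the lowest-order behaviour and making the initial step unconditional, after which the reduced-plus-compatible structure of $I$ is meant to propagate the conclusion upward.

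Once every commutator $[a_{i}\alpha^{i}(b_{j}),r]$ is shown to be zero, the products $a_{i}\alpha^{i}(b_{j})$ are central in $R$ for all $i,j$, which is precisely the assertion that $R$ is central $\alpha$-skew Armendariz. The one delicate point I anticipate beyond routine bookkeeping is ensuring that the reduced-ideal cancellations are only ever applied to expressions that genuinely lie in $I$; this is guaranteed because the relevant commutators are central modulo $I$, so the $\alpha$-rigidity of $I$ may be invoked legitimately at each stage.
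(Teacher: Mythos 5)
Your overall route coincides with the paper's: pass to $R/I$ to get $[a_i\alpha^i(b_j),r]\in I$, then use reducedness of $I$ to kill these commutators, with an induction on the coefficient equations supplying the needed annihilation relations. The problem is that the decisive step is exactly the one you leave as a stated ``goal,'' and the mechanism you propose for reaching it is misdescribed. The induction you outline (multiplying the equations $\sum_{i+j=k}a_i\alpha^i(b_j)=0$ on the right by $I\alpha^n(b_0)$, on the left by $a_0I$, and so on, then cancelling via reducedness of $I$) does not act on commutators at all; what it actually yields is the family of annihilation relations $a_iI\alpha^i(b_j)=\alpha^i(b_j)Ia_i=0$ for all $i,j$. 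Converting those into $uIu=0$ for $u=[c,r]$ with $c=a_i\alpha^i(b_j)$ is a separate, short but indispensable, computation that your sketch omits: from $\alpha^i(b_j)Ia_i=0$ one gets $cIc=a_i\bigl(\alpha^i(b_j)Ia_i\bigr)\alpha^i(b_j)=0$, and since $I$ is a two-sided ideal each of the four terms of $(cr-rc)I(cr-rc)$ lands in $R\,cIc\,R=0$; then $u\in I$ gives $u^3\in uIu=0$, and reducedness of $I$ forces $u=0$. Without this bridge, ``forcing the corresponding commutators into the reduced regime of $I$'' is a hope rather than an argument.

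A smaller point: you assign the hypothesis $a_0\in U(R)$ the job of anchoring the induction, but the paper's proof never uses it; the base case is anchored by $a_0b_0=0$, which via $\alpha$-compatibility and reducedness of $I$ already yields $\alpha^n(b_0)Ia_0=0=a_0I\alpha^n(b_0)$. (The theorem's statement is garbled around this clause, so the discrepancy is understandable, but you should not make the induction depend on a hypothesis that plays no role.) Your preliminary observations --- that $I$ is $\alpha$-rigid as a reduced $\alpha$-compatible ring, and that $x\in I$ with $xrx=0$ forces $xr=0$ --- are correct and are indeed the working tools of the paper's argument.
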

\begin{proof}
Let 
$a,b\in R$. Since $R$ is $\alpha$-compatible
$ab=0$ implies that  for any 
$n\in \mathbb{N}$,
$a\alpha^{n}(b)=0$.
Then 
$(\alpha^{n}(b)Ia)^{2}=0$. Since 
$\alpha^{n}(b)Ia\subseteq I$ and
$I$
is reduced,
$\alpha^{n}(b)Ia=0$.
Also,
$(aI\alpha^{n}(b))^{3}\subseteq (aI\alpha^{n}(b))(I)(aI\alpha^{n}(b))=0 $.
Therefore 
$aI\alpha^{n}(b)=0$. Assume
$f(x)=a_{0}+a_{1}x+\cdots +a_{n}x^{n}$,
$g(x)=b_{0}+b_{1}x+\cdots +b_{m}x^{m}\in R[x;\alpha]$ and 
$f(x)g(x)=0$. Then
\begin{eqnarray}
a_{0}b_{0}=0
\label{eq1}
\\
a_{0}b_{1}+a_{1}\alpha(b_{0})=0
\label{eq2}
\\
a_{0}b_{2}+a_{1}\alpha(b_{1})+a_{2}\alpha^{2}(b_{0})=0
\label{eq3}
\end{eqnarray}
We first show that for any 
$a_{i}\alpha^{i}(b_{j})$,
$a_{i}I\alpha^{i}(b_{j})=\alpha^{i}(b_{j})Ia_{i}=0$.
Multiply (\ref{eq2}) from the right by 
$I\alpha^{n}(b_{0})$, we have
$a_{1}\alpha(b_{0})I\alpha^{n}(b_{0})=0$, since
$a_{0}b_{1}I\alpha^{n}(b_{0})=0$.
Then
$(\alpha^{n}(b_{0})Ia_{1})^{3}\subseteq \alpha^{n}(b_{0})I(a_{1}\alpha^{n}(b_{0})Ia_{1}\alpha^{n}(b_{0}))Ia_{1}=0$.
Hence
$\alpha^{n}(b_{0})Ia_{1}=0$. This implies 
$a_{1}I\alpha^{n}(b_{0})=0$.
Multiply (\ref{eq2}) from the left by
$a_{0}I$,
we have 
$a_{0}Ia_{0}b_{1}+a_{0}Ia_{1}\alpha(b_{0})=0$ and so 
$a_{0}Ia_{0}b_{1}=0$. Thus 
$(b_{1}Ia_{0})^{3}=0$ and 
$b_{1}Ia_{0}=0$.
Now multiply (\ref{eq3}) from right by 
$I\alpha^{n}(b_{0})$. 
Then 
$a_{2}\alpha^{2}(b_{0})I\alpha^{n}(b_{0})=0$ and 
$(\alpha^{2}(b_{0})Ia_{2})^{3}=0$. So
$\alpha^{2}(b_{0})Ia_{2}=0$ and 
$a_{2}I\alpha^{2}b_{0}=0$ and 
$a_{2}I\alpha^{2}(b_{0})=0$.
 Now from \ref{eq3} we have $a_{0}b_{2}I+a_{1}\alpha(b_{1})I+a_{2}\alpha^{2}(b_{0})I=0$.
 Since
 $a_{0}b_{1}+a_{1}\alpha(b_{0})=0$ and
$\alpha(a_{0})=a_{0}$
 the square of 
 $a_{0}b_{2}I$ and
 $a_{2}\alpha^{2}(b_{0})I$ are zero, 
 $a_{0}b_{2}I=a_{2}\alpha^{2}(b_{0})I=0$ so 
 $a_{1}\alpha(b_{1})I=0$. Then 
 $(\alpha(b_{1})Ia_{1})^{2}=0$ and $\alpha(b_{1})Ia_{1}=0$. So $a_{1}I\alpha(b_{1})=0$. Continuing in this way we have 
 $a_{i}I\alpha^{i}(b_{j})=\alpha^{i}(b_{j})Ia_{i}=0$.
Since 
$R/I$ is central $\alpha$-skew Armendariz, it follows that
$a_{i}\alpha^{i}(b_{j})\in C(R/I)$. So 
$a_{i}\alpha^{i}(b_{j})r-ra_{i}\alpha^{i}(b_{j})\in I$
for any
$r\in R$.
Now from the above results, we have 
$(a_{i}\alpha^{i}(b_{j})r-ra_{i}\alpha^{i}(b_{j}))I(a_{i}\alpha^{i}(b_{j})r-ra_{i}\alpha^{i}(b_{j}))=0$.Then 
$a_{i}\alpha^{i}(b_{j})r=ra_{i}\alpha^{i}(b_{j})$
for all
$r\in R$.
Hence 
$a_{i}\alpha^{i}(b_{j})$ is central for all
$i$ and
$j$. This completes the proof.
\end{proof}
Note that in Theorem \ref{thme14}, if $R$ is an $\alpha$-rigid ring instead of $\alpha$-compatible, then by \cite[Proposition 8]{alpha} $R$ should be central $\alpha$-skew Armendariz.
 The following example, shows that there exists a non-identity endomorphism $\alpha$ of a ring $R$ such that $R/I$ 
is central $\bar{\alpha}$-skew Armendariz and as a ring $I$ is central $\alpha$-skew Armendariz for any nonzero proper ideal $I$ of $R$, but $R$ is not central $\alpha$-skew Armendariz .
\begin{examp}
Let $F$ be a field and consider a ring
$R=\bigl( \begin{smallmatrix}
  F&F\\ 0&F
\end{smallmatrix} \bigr)$
and an endomorphism $\alpha$ of $R$ defined by
$\alpha \Big( \bigl( \begin{smallmatrix}
  a&b\\ 0&c
\end{smallmatrix} \bigr)\Big) =\bigl( \begin{smallmatrix}
  a&-b\\ 0&c
\end{smallmatrix} \bigr)$
For
$f(x)= \bigl( \begin{smallmatrix}
  1&0\\ 0&0
\end{smallmatrix} \bigr) +\bigl( \begin{smallmatrix}
  1&1\\ 0&0
\end{smallmatrix} \bigr)x ,\quad g(x)= \bigl( \begin{smallmatrix}
  0&0\\ 0&-1
\end{smallmatrix} \bigr) +\bigl( \begin{smallmatrix}
  0&1\\ 0&1
\end{smallmatrix} \bigr)x \in R[x;\alpha]$
we have $f(x)g(x)=0$, but
$ \bigl( \begin{smallmatrix}
  1&1\\ 0&0
\end{smallmatrix} \bigr) \alpha \Big(\bigl( \begin{smallmatrix}
  0&0\\ 0&-1
\end{smallmatrix} \bigr)\Big)=\bigl( \begin{smallmatrix}
  0&-1\\ 0&0
\end{smallmatrix} \bigr) \notin C(R)$. 
Thus $R$ is not central $\bar{\alpha}$-skew Armendariz. But by \cite[Example 12]{alpha},
for 
$I=\bigl( \begin{smallmatrix}
  F&F\\ 0&0
\end{smallmatrix} \bigr) $, $\bigl( \begin{smallmatrix}
  0&F\\ 0&F
\end{smallmatrix} \bigr)$and
$\bigl( \begin{smallmatrix}
  0&F\\ 0&0
\end{smallmatrix} \bigr)$
$R/I$ and $I$ are $\bar{\alpha}$-skew Armendariz and$\alpha $-skew Armendariz respectively and so $R/I$ and $I$ are central $\bar{\alpha}$-skew Armendariz and central $\alpha$-skew Armendariz respectively.
\end{examp}
However, we have the following result.
\begin{thm}
Let $\alpha$ be a monomorphism of a ring $R$, and $\alpha(1)=1$ where $1$ denotes the identity of $R$. If $R$ is $\alpha$-rigid, then a factor ring
 $\frac{R[x]}{\langle x^{2}\rangle}$ 
 is central $\bar{\alpha}$-skew Armendariz, where
  $\langle x^{2}\rangle$
 is an ideal of $R[x]$ generated by 
 $x^{2}$. The converse holds if $R$ is prime.
\end{thm}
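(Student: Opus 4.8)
The forward implication I would dispatch at once. Via $a+bx\mapsto\bigl(\begin{smallmatrix}a&b\\0&a\end{smallmatrix}\bigr)$ the ring $R[x]/\langle x^{2}\rangle$ is isomorphic to the trivial extension $T(R,R)$, and under this identification the induced map (coefficientwise $\alpha$, which fixes the class of $x$ since $\alpha(1)=1$) corresponds exactly to $\bar\alpha((a,b))=(\alpha(a),\alpha(b))$. As $R$ is $\alpha$-rigid, the discussion preceding the theorem (using \cite[Proposition 15]{alpha}) already gives that $T(R,R)$ is $\bar\alpha$-skew Armendariz, hence central $\bar\alpha$-skew Armendariz; so $R[x]/\langle x^{2}\rangle$ is central $\bar\alpha$-skew Armendariz.

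For the converse I would write $S=R[x]/\langle x^{2}\rangle$, keep $x$ for the nilpotent generator (so $x^{2}=0$, $x$ is central over $R$, and $\bar\alpha(x)=x$), and use a fresh variable $y$ for $S[y;\bar\alpha]$. Two standing observations drive the argument. First, $C(S)=C(R)+C(R)x$ (a routine check on the trivial extension), so any coefficient forced into $C(S)$ has both of its $R$-components in $C(R)$. Second, since $R$ is prime, $C(R)$ is a domain (if $c,c'\in C(R)$ with $cc'=0$ then $cRc'=Rcc'=0$), in particular $C(R)$ has no nonzero nilpotents. Now take $a\in R$ with $a\alpha(a)=0$; the goal is $a=0$. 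The plan is to prove $\alpha(a)\,w\,a\in C(R)$ for every $w\in R$. Granting this, $(\alpha(a)wa)^{2}=\alpha(a)w\,(a\alpha(a))\,wa=0$, so each $\alpha(a)wa$ is a nilpotent element of the domain $C(R)$ and therefore vanishes; thus $\alpha(a)Ra=0$, primeness gives $\alpha(a)=0$ or $a=0$, and since $\alpha$ is a monomorphism we conclude $a=0$, i.e.\ $R$ is $\alpha$-rigid.

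The heart is to exhibit $\alpha(a)wa$ as a coefficient product of a zero-product pair. For fixed $w$ I would take
\[ f(y)=\alpha(a)w\,x-\alpha(a)wa\,y,\qquad g(y)=x+a\,y\ \in\ S[y;\bar\alpha]. \]
A short computation using $x^{2}=0$, $xa=ax$, $\bar\alpha(x)=x$, $\bar\alpha(a)=\alpha(a)$ and $a\alpha(a)=0$ shows $f(y)g(y)=0$: the constant term is $\alpha(a)wx\cdot x=0$, the $y^{2}$-term is $-\alpha(a)wa\,\alpha(a)\,y^{2}=-\alpha(a)w(a\alpha(a))y^{2}=0$, and the two contributions to the $y$-term, namely $\alpha(a)wx\cdot a=\alpha(a)wa\,x$ and $-\alpha(a)wa\,\bar\alpha(x)=-\alpha(a)wa\,x$, cancel. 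Assuming for contradiction that $\alpha(a)wa\neq0$, both $f$ and $g$ are nonzero, so the central $\bar\alpha$-skew Armendariz hypothesis applies and yields $f_{0}\bar\alpha^{0}(g_{1})=(\alpha(a)wx)\,a=\alpha(a)wa\,x\in C(S)$; reading off the $x$-component gives $\alpha(a)wa\in C(R)$, contradicting that this nonzero element is nilpotent in the domain $C(R)$. Hence $\alpha(a)wa=0$ for all $w$, completing the converse.

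The step I expect to be delicate is precisely the choice of $f$ and $g$. The naive attempt inside $R[y;\alpha]$ — forcing $\alpha(a)wa=f_{0}g_{1}$ and cancelling it against $f_{1}\alpha(g_{0})$ — fails, because every twisted coefficient $f_{i}\alpha^{i}(g_{j})$ with $i\ge1$ ends in $\operatorname{im}\alpha$, so cancelling a term ending in $a$ would demand $a\in\operatorname{im}\alpha$, which is unavailable. The device that rescues the computation is to work in $S$ and set $g_{0}=x$: since $f_{0}$ lies in the square-zero part $Rx$, the constant term $f_{0}g_{0}$ vanishes automatically, while $\alpha(1)=1$ lets $f_{1}\bar\alpha(g_{0})=-\alpha(a)wa\,\bar\alpha(x)=-\alpha(a)wa\,x$ provide the required cancellation with no surjectivity of $\alpha$. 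I would also verify carefully the two bookkeeping facts on which everything rests, namely the description $C(S)=C(R)+C(R)x$ and that a prime ring has a domain as its center.
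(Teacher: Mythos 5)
Your proof is correct, and the forward direction coincides with the paper's (both just quote the result from \cite{alpha} that $\alpha$-rigidity makes $R[x]/\langle x^{2}\rangle \cong T(R,R)$ skew Armendariz). The converse, however, takes a genuinely different route. The paper uses a single zero-product pair, $\bigl(\alpha(r)-\bar{x}y\bigr)\bigl(r+\bar{x}y\bigr)=0$, reads off $\alpha(r)\bar{x}\in C(S)$, concludes $\alpha(r)\in C(R)$, and then gets $\alpha(r)Rr=R\alpha(r)r=0$ directly; you instead build a family of pairs $\bigl(\alpha(a)w\bar{x}-\alpha(a)wa\,y\bigr)\bigl(\bar{x}+ay\bigr)=0$ indexed by $w\in R$, extract $\alpha(a)wa\in C(R)$ from $C(S)=C(R)+C(R)\bar{x}$, and kill each $\alpha(a)wa$ as a nilpotent in the domain $C(R)$. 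Your version is longer and needs the two auxiliary facts you flag (the description of $C(S)$ and that the centre of a prime ring is a domain), whereas the paper's is a one-line extraction. What your version buys is that it runs cleanly from the hypothesis $a\alpha(a)=0$, which is exactly the paper's stated definition of $\alpha$-rigid; the paper's computation actually starts from $\alpha(r)r=0$ (that is what makes its constant term vanish), i.e.\ the reversed condition, and its final step $\alpha(r)ar=a\alpha(r)r=0$ also depends on that ordering. Your argument therefore closes a small mismatch present in the paper's own proof. Both arguments finish identically: primeness gives $\alpha(a)=0$ or $a=0$, and injectivity of $\alpha$ forces $a=0$.
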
 
\begin{proof}
 Let
 $R$
  be $\alpha$-rigid. Then by \cite[Proposition 8]{alpha},
 $\frac{R[x]}{\langle x^{2}\rangle}$ is 
 $\bar{\alpha}$-skew Armendariz, and so is central 
 $\bar{\alpha}$-skew Armendariz.
Conversely, assume that 
 $\frac{R[x]}{\langle x^{2}\rangle}$ 
 is central $\alpha$-skew Armendariz. Let 
 $r\in R$
 with 
 $\alpha(r)r=0$.
 Then 
\begin{center}
 $(\alpha(r)-\bar{x}y)(r+\bar{x}y)=\alpha(r)r+(\alpha(r)\bar{x}-\bar{x}\alpha(r))y-\alpha(1)\bar{x}^{2}y^{2}=\bar{0}$
\end{center}
Since
$\alpha(r)\bar{x}=\bar{x}\alpha(r)$ in
 $\frac{R[x]}{\langle x^{2}\rangle}[y;\alpha]$ ,
 where 
 $\bar{x}=x+\langle x^{2}\rangle\in\frac{R[x]}{\langle x^{2}\rangle} $.
 Since 
 $\frac{R[x]}{\langle x^{2}\rangle}$ is central $\alpha$-skew Armendariz,
 $\alpha(r)\bar{x}\in C(\frac{R[x]}{\langle x^{2}\rangle})$  
 and so 
 $\alpha(r)\bar{x}\bar{a}=\bar{a}\alpha(r)\bar{x}$
 for any
 $a\in R$.
 Then 
 $\alpha(r)a=a\alpha(r)$, hence 
 $\alpha(r)Rr=0$.
 Since $R$ is prime and $\alpha$ is injective, $r=0$.
 Therefore $R$ is $\alpha$-rigid.
 \end{proof}
  Let 
 $\alpha $ be an automorphism of a ring $R$. Suppose that there exists the classical right quotient ring $Q(R)$ of $R$. Then for  any $ab^{-1}\in Q(R) $ where 
 $a,b\in R$ with $b$ regular, the induced map 
 $\bar{\alpha}:Q(R)\longrightarrow Q(R)$
 defined by 
 $\bar{\alpha}(ab^{-1})=\alpha(a)\alpha(b)^{-1}$ is also an automorphism.
 Note that $R$ is $\alpha$-rigid if and only if $Q(R)$ is $\bar{\alpha}$-rigid. Hence if $R$ is $\alpha$-rigid, then $Q(R)$
 is $\bar{\alpha}$-skew Armendariz, and so is central $\bar{\alpha}$-skew Armendariz.\\
 Let
 $S$ denote a multiplicatively closed subset of a ring $R$ consisting of central regular elements. And let $RS^{-1}$ be the localization of $R$ at $S$. 
\begin{prop}\label{prop17}
Let $\alpha$ be an automorphism of a ring $R$.  Then $R$ is central $\alpha$-skew Armendariz, if and only if $RS^{-1}$ is central $\bar{\alpha}$-skew Armendariz.
\end{prop}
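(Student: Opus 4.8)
The plan is to exploit that $R$ sits inside $RS^{-1}$ as a subring and that, because every $s\in S$ is central and regular, denominators can be cleared without disturbing centrality; the twist $\bar{\alpha}$ only forces care about which side a denominator is moved to.

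For the backward implication, the map $r\mapsto r\cdot 1^{-1}$ embeds $R$ into $RS^{-1}$ (injectivity being exactly regularity of the elements of $S$), and under this embedding $\bar{\alpha}$ restricts to $\alpha$. Thus $R$ is a subring of $RS^{-1}$ stable under the ambient endomorphism, so if $RS^{-1}$ is central $\bar{\alpha}$-skew Armendariz then so is $R$; the only thing to verify is that an element of $R$ lying in $C(RS^{-1})$ already lies in $C(R)$, which is immediate. This is the routine direction.

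For the forward implication, suppose $f(x)=\sum_i (u_i^{-1}a_i)x^i$ and $g(x)=\sum_j (v_j^{-1}b_j)x^j$ in $RS^{-1}[x;\bar{\alpha}]$ satisfy $f(x)g(x)=0$, with $a_i,b_j\in R$ and $u_i,v_j\in S$. I would first clear the coefficient denominators on the left: putting $w=\prod_i u_i$ and $z=\prod_j v_j$ (central, regular, in $S$), one has $u_i^{-1}a_i=w^{-1}c_i$ and $v_j^{-1}b_j=z^{-1}d_j$ with $c_i,d_j\in R$, so $f(x)=w^{-1}f_0(x)$ and $g(x)=z^{-1}g_0(x)$ for honest polynomials $f_0,g_0\in R[x;\alpha]$ (left multiplication by a central scalar merely rescales coefficients and involves no twist). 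Cancelling the regular central factor $w^{-1}$ gives $f_0(x)\,z^{-1}g_0(x)=0$. The one genuinely noncommutative step is to push the middle denominator $z^{-1}$ to the outside: since $x^i z^{-1}=\alpha^i(z)^{-1}x^i$ in the skew ring, introducing $Z=\prod_{i=0}^{n}\alpha^i(z)\in S$ lets me write $f_0(x)z^{-1}=Z^{-1}\tilde{f}(x)$ with $\tilde{f}\in R[x;\alpha]$, whence $\tilde{f}(x)g_0(x)=0$ after cancelling $Z^{-1}$. Now both factors lie in $R[x;\alpha]$, so the hypothesis that $R$ is central $\alpha$-skew Armendariz yields $e_i\,\alpha^i(d_j)\in C(R)$ for all $i,j$, where $e_i$ is the $i$th coefficient of $\tilde{f}$. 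Tracing back through the definitions, each $e_i\alpha^i(d_j)$ differs from $a_i\alpha^i(b_j)$ by a central invertible scalar of $RS^{-1}$ (a product of elements of $S$ and their inverses), so $a_i\alpha^i(b_j)\in C(RS^{-1})$; and the target coefficient $(u_i^{-1}a_i)\bar{\alpha}^i(v_j^{-1}b_j)$ equals $(u_i\alpha^i(v_j))^{-1}a_i\alpha^i(b_j)$, once more a central scalar times $a_i\alpha^i(b_j)$, hence central, which establishes the claim.

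The main obstacle is the bookkeeping forced by the twist $\bar{\alpha}$: one cannot simply factor all denominators out of a skew polynomial, because a constant $s^{-1}$ does not commute with $x$ unless $\bar{\alpha}(s)=s$. The device above, moving denominators to a single side and absorbing the various $\alpha^i(z)$ into the single product $Z$, is what circumvents this. A secondary point that should be flagged is that $\alpha^i(z)^{-1}$ must make sense in $RS^{-1}$; this is precisely where one uses that $\bar{\alpha}$ is an automorphism of $RS^{-1}$, equivalently that $\alpha$ carries $S$ onto $S$, so that every translated denominator remains an invertible central element.
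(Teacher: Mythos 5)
Your proof is correct and follows essentially the same route as the paper's: clear denominators using the centrality of $S$, move the one interior denominator past the skew variable via a product of $\alpha$-translates so that the product of two honest polynomials in $R[x;\alpha]$ vanishes, apply the hypothesis there, and transfer centrality back through central units of $RS^{-1}$. Your version is somewhat more explicit than the paper's about the twist bookkeeping and about the need for $\alpha$ to carry elements of $S$ to units of $RS^{-1}$, but the underlying argument is the same.
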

\begin{proof}
Suppose that $R$ is central $\alpha$-skew Armendariz ring. Let 
$f(x)= \sum_{i=0}^{n}(a_{i}/s_{i})x^{i}$,
$g(x)= \sum_{j=0}^{m}(b_{j}/d_{j})x^{j} \in  RS^{-1}[x;\alpha]$ and
$f(x)g(x)=0$.
Let
$a_{i}s_{i}^{-1}=c^{-1}a_{i}^{\prime}$
and
$b_{j}d_{j}^{-1}=d^{-1}b_{j}^{\prime}$
with $c, d$
regular elements in $R$. Then we have
$(a_{0}^{\prime}+\cdots+ a_{n}^{\prime}x^{n})d^{-1}(b_{0}^{\prime}+\cdots+ b_{m}^{\prime}x^{m})=0$.
We know that for each element 
$f(x) \in RS^{-1}[x;\bar{\alpha}]$
there exists a regular element $c\in R$
such that $f(x)=h(x)c^{-1}$, for some
$h(x)\in R[x;\alpha]$, or equivalently, 
$f(x)c\in R[x;\alpha]$. Therefore there exist a regular element $e $ in $R$ and
$(b_{0}^{\prime\prime}+\cdots+ b_{t}^{\prime\prime}x^{t})\in R[x;\alpha]$,
 such that
$d^{-1}(b_{0}^{\prime}+\cdots+ b_{m}^{\prime}x^{m})=(b_{0}^{\prime\prime}+\cdots+ b_{t}^{\prime\prime}x^{t})e^{-1}$.
Hence 
$(a_{0}^{\prime}+\cdots+ a_{n}^{\prime}x^{n})(b_{0}^{\prime\prime}+\cdots+ b_{t}^{\prime\prime}x^{t})=0$.
Since $R$ is central $\alpha$-skew Armendariz, 
$a_{i}^{\prime}\alpha^{i}(b_{j}^{\prime\prime})\in C(R)$ for all $i$ and $j$. Therefore 
 $ca_{i}s_{i}^{-1}\alpha^{i}(b_{j}d_{j}^{-1}e^{-1})\in C(R)$. Since $c$ and $e$ are regular element of $R$, 
 $a_{i}s_{i}^{-1}\alpha(b_{j}d_{j}^{-1}) $ are central in $R$ for all $i$ and $j$.
 Conversely, assume that $RS^{-1}$ is central $\alpha$-skew Armendariz ring. Since subrings of central $\alpha$-skew Armendariz rings are central $\alpha$-skew Armendariz, then $R$ is central $\alpha$-skew Armendariz.
\end{proof}
\begin{cor}
Let $R$ be a ring and $\alpha$ an automorphism of $R$, such that $\alpha^{t}=I_{R}$ for some positive integer $t$, then the following are equivalent:
\item[(1)] $R$ is central $\alpha$-skew Aremendariz.
\item[(2)] $R[x] $ is central $\alpha$-skew Aremendariz.
\item[(3)]$R[x,x^{-1}]$ is central $\alpha$-skew Aremendariz.
\end{cor}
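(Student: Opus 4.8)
The plan is to reduce everything to two results already established in this section. The equivalence of (1) and (2) is nothing other than the theorem immediately preceding this corollary, since $\alpha$ is in particular an endomorphism of $R$ with $\alpha^{t}=I_{R}$; so no further argument is needed there. The only genuine task is to connect (2) with (3), and for this the natural instrument is Proposition \ref{prop17} on localizations at central regular sets.

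To invoke it, I would exhibit the Laurent polynomial ring as a localization of $R[x]$. Put $S=\lbrace 1,x,x^{2},\ldots\rbrace\subseteq R[x]$. In the ordinary polynomial ring $R[x]$ the indeterminate $x$ is central and regular, so $S$ is a multiplicatively closed subset consisting of central regular elements, and $R[x]S^{-1}=R[x,x^{-1}]$. Moreover $\alpha$ extends coefficientwise to an automorphism of $R[x]$ (an automorphism because $\alpha$ is one on $R$), and since $\alpha(1)=1$ this extension fixes every power of $x$; hence the induced automorphism $\bar{\alpha}$ of the localization $R[x]S^{-1}$ is precisely the coefficientwise extension of $\alpha$ to $R[x,x^{-1}]$ that appears in (3).

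With this identification in hand, I would apply Proposition \ref{prop17} with $R[x]$ in the role of the base ring and $S=\lbrace x^{n}\rbrace$ in the role of the central regular multiplicative set. It yields at once that $R[x]$ is central $\alpha$-skew Armendariz if and only if $R[x]S^{-1}=R[x,x^{-1}]$ is central $\bar{\alpha}$-skew Armendariz, that is, the equivalence of (2) and (3). Chaining this with (1)$\Leftrightarrow$(2) gives the three-way equivalence.

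The steps are all direct citations, so I do not expect a serious obstacle; the only points demanding care are bookkeeping ones. I would check explicitly that $S$ really consists of central regular elements of $R[x]$ (straightforward, since the skew structure lives only in the auxiliary testing variable, not in $x$), and that the \emph{automorphism} hypothesis on $\alpha$ rather than merely the endomorphism hypothesis is what licenses Proposition \ref{prop17}, while the hypothesis $\alpha^{t}=I_{R}$ is what licenses the (1)$\Leftrightarrow$(2) step. Keeping these two hypotheses assigned to their respective roles is the main thing to get right.
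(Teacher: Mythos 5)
Your proposal is correct and follows essentially the same route as the paper: the equivalence (1)$\Leftrightarrow$(2) is the preceding theorem, and (2)$\Leftrightarrow$(3) is Proposition \ref{prop17} applied to $R[x]$ with $S=\lbrace 1,x,x^{2},\ldots\rbrace$ as the central regular multiplicative set. The paper states this in two lines; your version merely spells out the bookkeeping (that $S$ is central and regular in $R[x]$, and which hypothesis licenses which step) that the paper leaves implicit.
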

\begin{proof}
Let $S=\lbrace 1,x,x^{2},x^{3}, x^{4},\cdots \rbrace$. Then $S$ is a multiplicatively closed subset of $R[x]$ consisting of central regular elements. Then the proof follows from Proposition \ref{prop17}.
\end{proof}

\small

\end{document}